\theoremstyle{definition}
\newtheorem{defin}{Definition}[section]
\theoremstyle{definition}
\newtheorem{teo}{Theorem}
\newtheorem{cor}{Corollary}[section]
\newtheorem{prop}{Proposition}[section]
\newtheorem{lema}{Lemma}[section]
\theoremstyle{remark}
\newtheorem{rem}{Remark}[section]
\begin{document}	
\title[Invariant Surfaces for Toric Type Foliations]
{Invariant Surfaces for Toric Type Foliations in Dimension Three}
\author{Felipe Cano}
\address{Dpto. \'Algebra, An\'alisis Matem\'atico, Geometr\'ia y Topolog\'ia, Facultad de Ciencias, Universidad de Valladolid, Paseo de Bel\'en, 7, 47011 Valladolid, Spain}
\email{fcano@agt.uva.es}
\author{Beatriz Molina-Samper}
\address{Dpto. \'Algebra, An\'alisis Matem\'atico, Geometr\'ia y Topolog\'ia and Instituto de Matem\'aticas de la Universidad de Valladolid, Facultad de Ciencias, Universidad de Valladolid, Paseo de Bel\'en, 7, 47011 Valladolid, Spain}
\email{beatriz.molina@uva.es}
\thanks{Both authors are supported by the Ministerio de Econom\'ia y Competitividad from Spain, under the Project ``Algebra y geometr\'ia en sistemas din\'amicos y foliaciones singulares.'' (Ref.:  MTM2016-77642-C2-1-P). The second author is also supported by the Ministerio de Educaci\'on, Cultura y Deporte of Spain (FPU14/02653 grant)}			

\subjclass[2010]{32S65, 14M25, 14E15.}
\date{}
\dedicatory{}
\keywords{Singular foliations, Invariant surfaces, Toric varieties, Combinatorial blowing-ups}
\begin{abstract}
A foliation is of toric type when it has a combinatorial reduction of singularities. We show that every toric type foliation on $(\mathbb{C}^3,0)$, without saddle-nodes, has invariant surface. We extend the argument of Cano-Cerveau, done for the non-dicritical case, to the compact dicritical components of the exceptional divisor. These components are  projective toric surfaces and the isolated invariant branches of the induced foliation extend to closed irreducible curves. We build the invariant surface as a germ along the singular locus and those closed irreducible invariant curves. The result of Ortiz-Rosales-Voronin, about the distribution of invariant branches in dimension two, is a key argument in our proof.
\end{abstract}

\maketitle
\section{Introduction}
The problem of existence of invariant hypersurfaces for a holomorphic codimension one foliation is a leitmotif in the theory of holomorphic singular foliations, starting with a question of Ren\'e Thom. The main result in this paper is a contribution to this problem, stated as follows:
	
\begin{teo} 
\label{teo:main}
Every toric type complex hyperbolic foliation on $(\mathbb{C}^3,0)$ has an invariant surface.
\end{teo}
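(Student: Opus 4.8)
The plan is to reduce the statement to a construction on the space of a combinatorial reduction of singularities and then to descend the result to $(\mathbb{C}^3,0)$. First I would apply the combinatorial (toric) reduction of singularities afforded by the toric type hypothesis, obtaining a proper morphism $\pi\colon X\to(\mathbb{C}^3,0)$ from a toric ambient space, with exceptional divisor $E=\bigcup_i E_i$ a normal crossings divisor, such that the transformed foliation $\mathcal{F}^*$ has only simple singularities. Because $\mathcal{F}$ is hyperbolic, no saddle-nodes are created, so every simple singularity of $\mathcal{F}^*$ carries its full set of separatrices. I would then classify the components $E_i$ as non-dicritical (invariant) or dicritical, and observe that it suffices to build a germ of invariant surface $\mathcal{S}$ of $\mathcal{F}^*$ along a suitable compact part of $E$ whose image $\pi(\mathcal{S})$ is a genuine two-dimensional germ through the origin.

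For the non-dicritical components I would invoke the Cano--Cerveau argument: the union $D$ of the non-dicritical components is itself $\mathcal{F}^*$-invariant, and along the singular locus and the crossings $E_i\cap E_j$ the normal forms of the simple, saddle-node-free singularities furnish local separatrix surfaces that prolong $D$ coherently. This disposes of everything except the dicritical components and their interaction with $D$.

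The core new step concerns the dicritical components. Each dicritical $E_i$ is a projective toric surface, and $\mathcal{F}^*$ induces on it, by restriction, a two-dimensional foliation $\mathcal{F}_i$ with simple singularities at the points where $E_i$ meets the singular locus and the other components. Here I would apply the Ortiz--Rosales--Voronin theorem on the distribution of invariant branches in dimension two to $\mathcal{F}_i$: it guarantees enough isolated invariant branches, and, exploiting the toric structure of $E_i$, these prolong to closed irreducible invariant curves $C\subset E_i$. Each such $C$ is automatically invariant for $\mathcal{F}^*$ as well, and, since $E_i$ is dicritical, $\mathcal{F}^*$ is generically transverse to $E_i$ along $C$; the germ of $\mathcal{F}^*$ along $C$ therefore carries an invariant surface $S_C$ containing $C$, assembled from the nonsingular leaf away from the singular points and from the separatrix surfaces at the simple points of $C$. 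Gluing $D$, the separatrix surfaces along the singular locus, and the surfaces $S_C$ along the crossings $E_i\cap E_j$ and the curves $C$ yields the global germ $\mathcal{S}$, whose image $S=\pi(\mathcal{S})$ is the desired invariant surface, passing through the origin because $\mathcal{S}$ meets $E$ in curves lying over $0$.

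I expect the main obstacle to be precisely this dicritical step: ensuring that the induced planar foliation on each projective toric surface possesses enough isolated invariant branches and that these assemble into closed irreducible curves, so that the surfaces $S_C$ exist globally and descend to a non-empty surface through the origin rather than collapsing into the exceptional divisor. This is where the Ortiz--Rosales--Voronin result is indispensable and where the combinatorial structure of the $E_i$ must be used in an essential way; the compatibility of the glued pieces along the singular locus and the crossings is then a matter of matching the standard local normal forms of simple, saddle-node-free singularities.
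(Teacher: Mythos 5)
Your overall architecture is the paper's: reduce the singularities combinatorially, run the Cano--Cerveau extension along the trace part of the singular locus, connect through closed invariant curves inside the dicritical components, and descend by properness (the paper uses Remmert's theorem, which requires the glued germ to be a \emph{closed} immersion). But there is a genuine gap at exactly the step you flag as the main obstacle, and it comes from a misattribution. The Ortiz--Rosales--Voronin theorem \cite{Ort-R-V} is a refined Camacho--Sad statement: after a sequence of blowing-ups of a planar complex hyperbolic foliation, every connected component of the invariant part of the divisor consisting of simple points carries at least one \emph{trace-type} singularity. It produces local invariant branches; it says nothing about prolonging an isolated branch to a closed irreducible curve $Y$ in the surface, nor about the branches of $Y$ at $Y\cap D$ being again isolated. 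That prolongation property is a separate global theorem on projective toric surfaces, proved in \cite{Mol}, and it is the actual engine of the argument: it is what guarantees that the germ of invariant surface along an extended partial separatrix satisfies $S_{\mathcal E}\cap E={\mathcal E}$ (completeness, Proposition \ref{prop:prolongacionimplicacompletitud}), hence is a closed immersion. Without it, your surface $S_C$ along a closed invariant curve $C$ may reach a point of a dicritical crossing where the corresponding branch is not isolated, the gluing never closes up, and the immersion fails to be closed --- this is precisely the Jouanolou phenomenon, so Remmert cannot be applied. In the paper, \cite{Ort-R-V} plays a different and complementary role that your proposal leaves entirely unaddressed: proving that the construction is \emph{nonempty}. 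If all components of $E^0$ are dicritical (the only nontrivial case, since an invariant component of $E^0$ is already the desired surface), your non-dicritical part $D$ may be empty, and you never show that any trace singularity or any invariant closed curve exists; Section \ref{sec:Corner Type Points under Combinatorial Blowing-ups} does this via \cite{Ort-R-V} applied to transverse slices $\Delta_c$ and to strict transforms of the initial components.

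A second gap is the case analysis. Your claim that ``each dicritical $E_i$ is a projective toric surface'' is false in general: only the \emph{compact} components (those contracted to the origin) are projective toric surfaces (Lemma \ref{lema:superficietorica}), and these arise only when the origin, i.e.\ a triple intersection of components of $E^0$, can be a center --- the case $e_0(E^0)=3$. The non-compact dicritical components require a separate (easy) argument (Lemma \ref{lema:nocompacto}). More seriously, when $e_0(E^0)=2$ all centers are one-dimensional, no compact components appear, the prolongation property can genuinely fail along the resulting linear chain of dicritical components, and the paper needs a distinct direct argument (Proposition \ref{prop:doscomponentes}) building the surface step by step along the chain. Your uniform gluing scheme does not cover this case, so as written the proof would fail there even granting the toric prolongation theorem in the compact case.
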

A foliation is of toric type when it admits a combinatorial reduction of singularities, with respect to a given normal crossings divisor. The expression ``complex hyperbolic'' means that we can not extract saddle-nodes from the foliation. Anyway, these foliations may be dicritical, in the sense that there are some generically transversal irreducible components of the exceptional divisor after reduction of singularities.
	
The existence of invariant hypersurface has a positive answer in the non-dicritical situation. The result is due to Camacho-Sad in the bidimensional case \cite{Cam-S}, to Cano-Cerveau in the three-dimensional case \cite{Can-C} and to Cano-Mattei in general ambient dimension \cite{Can-M}. In contrast to what it happens in dimension two, there are dicritical examples of codimension one foliations in dimension three without invariant surface; the first family of such examples was given by Jouanolou \cite{Jou}.
	
In order to prove the existence of an invariant surface for a dicritical foliation on $(\mathbb{C}^3,0)$, it is essential to have ``good properties'' for the restriction of the foliation to compact dicritical components after reduction of singularities. In the context of toric type foliations, we see that the compact components of the exceptional divisor are nonsingular projective toric surfaces, in the sense of Toric Geometry, endowed in a natural way with a normal crossings divisor, compatible with the ambient divisor. In a previous work \cite{Mol}, we have proved that a toric type foliation $\mathcal{G}$ on a projective toric surface $S$, with associated divisor $D$, satisfies the following ``prolongation property for isolated invariant branches'':
\begin{quote}
``Every isolated invariant branch $(\Gamma,p)$ extends to a unique closed irreducible curve $Y \subset S$; moreover, all the branches of $Y$ at the points of $Y\cap D$ are isolated.''
\end{quote}
In a general way, if we have this property in the restriction to each one of the compact dicritical components after reduction of singularities, we can extend the argument of Cano-Cerveau in \cite{Can-C} to prove the existence of an invariant surface, provided we have at least a trace type simple singularity. The details of this argument are in Subsection \ref{subs:Extended Partial Separatrices}.
	
Now, it would be enough to find a trace type singular point after reduction of singularities, in the toric type context. Such a point appears if and only if there is at least one invariant component in the exceptional divisor, as we show in Section \ref{sec:Corner Type Points under Combinatorial Blowing-ups}. In the proof of this result, we invoke a refined version of Camacho-Sad's theorem proved by Ortiz-Rosales-Voronin in \cite{Ort-R-V}.
	
The remaining cases correspond to toric type foliations where we start with only two components of the initial divisor. In this situation only blowing-ups centered in curves are allowed, in an \'etale way over an initial one, and the existence of invariant surface follows by direct arguments.

We would like to end this introduction with a grateful acknowledgment to the two referees, for their important suggestions that have considerably improved the manuscript.
\vfill

\section{Preliminaries}
We introduce basic definitions and results concerning the theory of codimension one holomorphic singular foliations (for short, foliations). Some references are  \cite{Can, Can-C, Cer-M, Mat-M}.

A \emph{foliation} $\mathcal{F}$ on a nonsingular complex analytic space $M$ of dimension $n$ is an integrable invertible coherent $\mathcal{O}_M$-submodule $\mathcal{F}\subset \Omega_M^1$, that is saturated in the sense that ${\mathcal F}^{\bot\bot}={\mathcal F}$. This means that $\mathcal F$ is
locally generated at each point $p\in M$ by a holomorphic one-form $\omega \in \Omega^1_{M,p}$ satisfying $\omega \wedge d\omega=0$, that we write in local coordinates as
$$
\omega=f_1dx_1+f_2dx_2+\cdots+f_ndx_n, \quad f_i\in \mathcal{O}_{M,p},
$$
where the coefficients $f_i$ have no common factors. The \emph{order $\nu_p(\mathcal{F})$} of $\mathcal{F}$ at $p$ is defined by
$$
\nu_p(\mathcal{F})=\nu_p(f_1,f_2,\ldots,f_n)=\min\{\nu_p(f_i); \; i=1,2,\ldots,n \},
$$
where $\nu_p(f_i)$ is the order at $p$ of the coefficient $f_i$. The \emph{singular locus} $\text{Sing}(\mathcal{F})$ is the closed analytic subset of $M$ given by the points $p\in M$ with $\nu_p(\mathcal{F})\geq 1$.
Note that the codimension of the singular locus is greater or equal than two, otherwise we should find a common factor in the coefficients of local generators.

Note that any integrable local meromorphic one-form $\eta$ defines locally a foliation, just by considering a holomorphic form $\omega=(f/g)\eta$ without common factors in its coefficients.

The \emph{dimensional type $\tau_p(\mathcal{F})$  of $\mathcal{F}$ at $p$} is defined by the fact that
$n-\tau_p(\mathcal{F})$ is the dimension of the $\mathbb{C}$-vector space spanned by the vectors $\xi(p)\in T_pM$, where $\xi$ is a germ of vector field such that $\omega(\xi)=0$. In view of the classical Frobenius' Theorem and the Rectification Theorem of non-singular vector fields (see \cite{War}, for instance) we know that there are local coordinates $\mathbf{x}=(x_1,x_2,\ldots,x_n)$ such that $\mathcal{F}$ is locally generated by a one-form $\omega$ of the type
\begin{equation}
\label{eq:escrituraminimal}
\omega = \sum\nolimits_{i=1}^{\tau}f_i(x_1,x_2,\ldots x_{\tau})dx_{i}, \quad \tau=\tau_p(\mathcal{F}).
\end{equation}
Thus, the foliation is locally an analytic cylinder over a foliation on a space of dimension $\tau$. Note that $\tau=1$ if and only if $p\not \in \text{Sing}(\mathcal{F})$.

We recall that a hypersurface $H$ of $M$ is \emph{invariant for $\mathcal{F}$} when any local equation $f$ of $H$ divides $\omega \wedge df$. By Frobenius theorem, there is a unique germ of invariant hypersuface through each nonsingular point.

In a more general way, a morphism $\phi:(N,q) \rightarrow (M,p)$ is called invariant for $\mathcal F$ when $\phi^*\omega=0$ for the local generators $\omega$ of $\mathcal F$.  A closed analytic subspace $Y\subset M$ is called \emph{invariant for $\mathcal{F}$ at $p\in Y$} if each morphism $\phi:(\mathbb{C},0) \rightarrow (M,p)$ factoring through $(Y,p)$ is invariant.  We say that $Y$ is \emph{invariant for $\mathcal{F}$} when the property holds at each point $p\in Y$. Being invariant at a point is an open and closed property on $Y$. Hence, an irreducible subspace $Y$ is invariant if and only if it is invariant at a point.

The concept of ``complex hyperbolic foliation'' can be found in \cite{Can-R}:
\begin{defin}
A foliation $\mathcal F$ on $M$ is \emph{complex hyperbolic at $p\in M$} if there is no holomorphic map $\phi:(\mathbb{C}^2,0) \rightarrow (M,p)$ such that $0$ is a saddle-node for $\phi^*\mathcal{F}$. The foliation is \emph{complex hyperbolic} if the property holds at each point. A foliation $\mathcal F$ on $M$ is \emph{strongly complex hyperbolic at $p\in M$} if for any holomorphic map $\pi: (M',\pi^{-1}(p))\rightarrow (M,p)$ obtained by composition of blowing-ups the transformed foliation $\pi^*{\mathcal F}$ is complex hyperbolic.
\end{defin}

These properties has been required by the authors of the classical paper \cite{Cam-LN-S} in their study of the so-called ``generalized curves''  in ambient dimension two.
In fact they asked to have no saddle-nodes after reduction of singularities, but this is equivalent to the properties in the above definition. In general dimension, we require to have no ``hidden saddle-nodes'' obtained by pull-back by a holomorphic morphism.

In dimension three, thanks to the existence of reduction of singularities \cite{Can}, or in the ``non-dicritical case'',  being complex hyperbolic is equivalent to being strongly complex hyperbolic.
In the general case, we do not know if both definitions are  equivalent.

An important and evident feature of the strongly complex hyperbolic condition is that it is stable under blowing-ups.

\begin{defin}
A foliation $\mathcal F$ on $M$ is \emph{dicritical at $p\in M$} when there is a map $\phi:(\mathbb{C}^2,0) \rightarrow (M,p)$, such that $\phi^{-1}\mathcal{F}=(dx=0)$ and $\phi(y=0)$ is invariant for $\mathcal{F}$.
\end{defin}

In dimensions two and three, this definition is equivalent to the fact that there is a generically transversal component of the exceptional divisor after a reduction of singularities with invariant centers. Let us note that any germ of foliation having a holomorphic first integral is non-dicritical. Indeed, consider the foliation given by $df=0$ and assume that it is dicritical.  The pull-back is given by $d(f\circ\phi)=0$, since it is the foliation $dx=0$, the function $f\circ\phi$ is of the form $f\circ\phi=\psi(x)$. Now the fact that $\phi(y=0)$ is invariant means that $\psi$ is a constant function, contradiction.

There are good properties for the foliations that are both complex hyperbolic and non-dicritical. The more significant is that a reduction of singularities of their invariant hypersurfaces is automatically a reduction of singularities of the foliation \cite{Fer-M}. Such foliations could be denominated {\em generalized hypersurfaces}. In this paper we take the complex hyperbolic hypothesis, but we allow dicritical situations.

\section{Toric Type Foliations}
The concept of toric type foliated space was introduced in \cite{Cam-C} for the bidimensional case. Here we generalize it to higher dimension.

A foliation $\mathcal{F}$ on $M$ is called of \emph{toric type} if there is a normal crossings divisor $E$ on $M$ and a combinatorial sequence of blowing-ups
$$
\sigma:(M',E') \to (M,E)
$$
providing a reduction of singularities of the \emph{foliated space} $(M,E;\mathcal{F})$. Let us explain the terms in this definition.

A \emph{combinatorial blowing-up} $\pi:(M_1,E^1)\to (M,E)$ is any blowing-up centered at $Y$, where $Y$ is the closure of a stratum of the natural stratification of $M$ induced by $E$. The divisor $E^1$ is given by
$$
E^1=\pi^{-1}(E \cup Y)=\pi^{-1}(E)
$$
(note that $Y\subset E$). For a reduction of singularities of $(M,E;\mathcal{F})$, we assume in addition that the centers are invariant for $\mathcal{F}$.

We split the divisor $E$ as $E=E_{\text{inv}} \cup E_{\text{dic}}$, where $E_{\text{inv}}$ is the union of the invariant irreducible components of $E$ and $E_{\text{dic}}$ is the union of the generically transversal ones (dicritical components). At a given point $p\in M$, we denote by $e_p(E)$ the number of irreducible components of $E$ through $p$; note that $e_p(E)=\nu_p(E)$, where $\nu_p(E)$ stands for the multiplicity at $p$, since $E$ has the normal crossings property.

The foliated space $(M,E;\mathcal{F})$ is \emph{desingularized} if each point $p \in M$ is simple.
Let us recall the concept of \emph{simple point} (see \cite{Can, Can-C}). We precise here the definition only when the foliation is complex hyperbolic. A simple point may be of ``corner type'' or of ``trace type''. We give first the definition of simple corner point.

Let us consider $p\in M$ and denote $\tau$ the dimensional type of $\mathcal{F}$ at $p$.
Assume that $\mathcal F$ is complex hyperbolic. We say that $p$ is a \emph{simple corner for $(\mathcal{F},E)$} if there is a local coordinate system $\mathbf{x}=(x_1,x_2,\ldots,x_n)$ and a local generator $\omega= (\mbox{$\prod$}_{j=1}^\tau x_j)\eta$ of $\mathcal{F}$ satisfying the following conditions:
\begin{enumerate}
\item We have $E_{\text{inv}}=\cup_{j=1}^{\tau}(x_j=0)$ and $E_{\text{dic}}\subset\cup_{j=\tau+1}^{n}(x_j=0)$.
\item We can write $\eta =\sum\nolimits_{j=1}^\tau a_j(x_1,x_2,\ldots,x_\tau)dx_j/x_j$ in such a way that $\sum_{j=1}^\tau r_ja_j(p) \ne 0$ for every non-zero $(r_1,r_2,\ldots,r_\tau)\in \mathbb{Z}_{\geq 0}^{\tau}$.
\end{enumerate}
We say that $p$ is a \emph{simple trace for $(\mathcal{F},E)$} if there is a germ $(H,p)$ of nonsingular hypersurface invariant for $\mathcal{F}$ with $H \not\subset E$ and such that $p$ is a simple corner for $(\mathcal{F},E\cup H)$.

\begin{rem} 
\label{rem:tipotraza}
The singular locus $\text{Sing}(\mathcal{F})$ around a simple corner $p$ for $(\mathcal{F},E)$ is given by the union of the two by two intersections of the irreducible components of $E_\text{inv}$ passing through $p$. Moreover, all the singularities around $p$ are simple corners. When $p$ is a simple trace point, the set $\mathcal{T}_{\mathcal{F},E}$ of trace type simple singularities around $p$ is given by
$$
\mathcal{T}_{\mathcal{F},E}=H \cap E_\text{inv},
$$
where $H$ is the invariant hypersurface passing through $p$ and not contained in $E$. 
\end{rem}

The {\em adapted singular locus $\text{Sing}(\mathcal{F},E)$} is by definition the union of the singular locus $\text{Sing}(\mathcal{F})$ and the set of non-simple points. Note that the nonsingular simple points are exactly the ones where $E$ and $\mathcal{F}$ ``have normal crossings''.

\section{Extended Partial Separatrices}
Partial separatrices have been introduced in \cite{Can-R} to formalize the arguments in \cite{Can-C} for the construction of invariant surfaces of non-dicritical foliations in ambient dimension three. We extend the concept to the dicritical case to give afterwards properties that assure the existence of invariant surfaces supported by them.

Let $(M,E;\mathcal{F})$ be a desingularized complex hyperbolic foliated space in dimension three. We assume that $M$ is a germ along a compact set $K \subset E$; this is the typical situation that we obtain when we perform finitely many blowing-ups starting with $(\mathbb{C}^3,0)$.

By Remark \ref{rem:tipotraza}, the set of trace type simple singularities $\mathcal{T}_{\mathcal{F},E}$ is a closed analytic subspace of $M$. It is the union of the irreducible components of $\text{Sing}(\mathcal{F})$ that are contained exactly in one irreducible component of $E$. Let us take the following definition coming from \cite{Can-R}:
\begin{defin}
Let $(M,E;{\mathcal F})$ be a desingularized  complex hyperbolic foliated space of dimension three. We call {\em partial separatrices } of $(M,E;{\mathcal F})$ to the connected components of the set ${\mathcal T}_{{\mathcal F},E}$.
\end{defin}

Consider a partial separatrix ${\mathcal C}\subset {\mathcal T}_{{\mathcal F}, E}$. Given a point $q$ of $\mathcal{C}$, there is a unique germ $S_{q}$ of invariant irreducible surface such that $S_q \not\subset E$; moreover, we have that $S_q\cap E_{\text{inv}}$ is the germ of $\mathcal{T}_{\mathcal{F},E}$ at $q$. Note that here we take the hypothesis of being complex hyperbolic and then the convergence of $S_q$ is assured. The above property has been introduced in \cite{Can-C} as being the essential argument to build global invariant surfaces of $(M,E;{\mathcal F})$. The construction in \cite{Can-C} works if $\mathcal C$ does not meet dicritical components of $E$. Hence, the extension argument in \cite{Can-C} assures that the surface $S_{q}$ extends to an irreducible closed surface $S_\mathcal{C}\subset M$ invariant for $\mathcal{F}$, when $\mathcal{C} \cap E_{\text{dic}}=\emptyset$. When ${\mathcal C}\cap E_{\text{dic}}\ne\emptyset$, we can assure the existence of a germ of invariant surface $(S_{\mathcal C},\mathcal{C})$, along the germification set $\mathcal C\cap K$, where $K$ is the germification set of $M$. The important remark is that the immersion of germs
$$
(S_{\mathcal C},\mathcal{C}\cap K)\subset (M,K)
$$
is not necessarily closed: for instance, this phenomena appears in the reduction of singularities of Jouanolou's examples.

We are interested in connecting the partial separatrices of $(M,E; \mathcal{F})$ through closed invariant curves contained in the dicritical components of $E$. Denote by $\Sigma$ the set whose elements are the closed irreducible curves $Z$ invariant for $\mathcal{F}$ and satisfying:
\begin{equation}
\label{eq:sigma}
Z \subset E_{\text{dic}}, \; Z \not \subset E_\text{inv}, \; Z \cap \text{Sing}(\mathcal{F})\ne \emptyset.
\end{equation}
Note that $\Sigma$ is a finite set. Let $\tilde{\Sigma}$ be the union of the $Z \in \Sigma$ and denote $\mathcal{U}_{\mathcal{F},E}=\mathcal{T}_{\mathcal{F},E} \cup \tilde{\Sigma}$.	This allows us to establish the following definition:
\begin{defin} 
Let $(M,E; {\mathcal F})$ be a desingularized complex hyperbolic foliated space of dimension three, where $M$ is a germ along a compact set $K\subset E$. The connected components of
$\mathcal{U}_{\mathcal{F},E}$ are called {\em extended partial separatrices of
$(M,E; {\mathcal F})$}.
\end{defin}
Let $\mathcal{E}$ be an extended partial separatrix of $(M,E;{\mathcal F})$. There is a unique germ $(S_\mathcal{E},\mathcal{E}\cap K)$ of invariant surface along $\mathcal{E}\cap K$ such that $S_\mathcal{E}\not \subset E$. Next definition is important in our arguments:
\begin{defin} 
Let $(M,E;{\mathcal F})$ be a desingularized complex hyperbolic foliated space of dimension three, where $M$ is a germ along a compact set $K\subset E$.  We say that an extended partial separatrix $\mathcal E$ is {\em complete} when we have that $S_{\mathcal{E}} \cap E= \mathcal{E}$.
\end{defin}
If $\mathcal E$ is complete, the immersion $(S_{\mathcal E}, {\mathcal E}\cap K)\subset (M,K)$ is a closed immersion of germs and thus $S_{\mathcal E}$ is a closed invariant hypersurface of $M$.

\subsection{Prolongation of Isolated Branches} 
The property we introduce below has been studied in \cite{Mol} for toric type foliations in projective toric surfaces.

In this paper a {\em branch of curve} means an irreducible germ of curve at a point. We mainly consider plane branches, that is, that are contained in two dimensional ambient spaces.

Consider a two-dimensional nonsingular analytic space $S$, a closed irreducible curve $Y\subset S$ and a point $p\in Y$. We say that $Y$ {\em extends } a branch $(\Gamma,p)$ when $(\Gamma,p)$ is one of the irreducible components of the germ of curve $(Y,p)$. Note that if $Y, Y'$ are closed irreducible curves extending $(\Gamma,p)$, we have that $Y'=Y$.

Let $(S,D;\mathcal{G})$ be a foliated surface and take a branch of curve $(\Gamma,p)$ not contained in $D$. We say that $(\Gamma,p)$ is \emph{isolated for $(\mathcal{G},D)$} when for each morphism
$$
\sigma:(S',D'; \mathcal{G}')\rightarrow (S,D; \mathcal{G})
$$
that is the composition of a finite sequence of blowing-ups, we have that  the infinitely near point of $\Gamma$ in $S'$ belongs to the adapted singular locus $\text{Sing}(\mathcal{G}',D')$.
\begin{rem}
Note that an isolated branch $(\Gamma,p)$ is necessarily invariant for $\mathcal{G}$ and that $p \in \text{Sing}(\mathcal{G},D)$.
\end{rem}

\begin{rem}
\label{rem:isolateddesingularized}
When $(S,D; \mathcal{G})$ is desingularized, we have that a given invariant branch $(\Gamma,p)\not\subset (D,p)$ is isolated if and only if $p$ is a singular point of trace type.
\end{rem}

The prolongation property for isolated branches, introduced in next definition, has been studied in \cite{Mol}:
\begin{defin} 
A foliated surface $(S,D;\mathcal{G})$ has the \emph{prolongation property for isolated branches} when, for each isolated branch $(\Gamma,p)$, there is a closed irreducible curve $Y\subset S$ extending $(\Gamma,p)$ such that the branches $(\Upsilon,q) \subset (Y,q)$ are isolated, for each $q\in Y \cap D$.
\end{defin}

\begin{rem} 
\label{lema:prolongacionetapafinal}
Let us assume that the foliated surface $(S,D;{\mathcal G})$ is a complex hyperbolic desingularized foliated surface. Note that in this case ``complex hyperbolic'' exactly means that there are no saddle-nodes in $(S,D;{\mathcal G})$. In this situation, there is a bijection between trace type singularities and isolated branches: we associate to a trace type singularity the only invariant branch through it that is not contained in $D$. In this case the following statements are equivalent:
\begin{enumerate}
\item The foliated surface has the prolongation property for isolated branches.
\item Given a trace type singularity $p$, there is a closed irreducible curve $Y\subset S$ that extends the isolated branch through $p$ and such that $Y\cap D_{\text{dic}}=\emptyset$.
\end{enumerate}
\end{rem}

\subsection{Completeness of Extended Partial Separatrices} 
\label{subs:Extended Partial Separatrices}
As we shall see in the following section, the combinatorial reductions of singularities of local complex hyperbolic foliations in dimension three have the ``prolongation property for isolated branches'' in their restrictions to dicritical components, in the cases that we start with three dicritical components. This allows us to show that the extended partial separatrices are complete, according to next proposition:

\begin{prop} 
\label{prop:prolongacionimplicacompletitud}
Let $(M,E;\mathcal{F})$ be a desingularized complex hyperbolic foliated space of dimension three, where $M$ is a germ along a compact set $K\subset E$. Assume that, for each dicritical component $F$ of $E$, the restricted foliated surface $(F, E|_{F}; \mathcal{F}|_{F})$ has the prolongation property for isolated branches. Then every extended partial separatrix $\mathcal{E}$ is complete.
\end{prop}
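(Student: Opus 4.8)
The plan is to prove the completeness $S_{\mathcal E}\cap E=\mathcal E$ by reducing it to the single inclusion $S_{\mathcal E}\cap E\subseteq\mathcal U_{\mathcal F,E}=\mathcal T_{\mathcal F,E}\cup\tilde\Sigma$, together with a connectedness argument. The reverse inclusion $\mathcal E\subseteq S_{\mathcal E}\cap E$ is built into the definition of $S_{\mathcal E}$: along the trace part one has $S_q\cap E_{\text{inv}}=(\mathcal T_{\mathcal F,E},q)$, and along each curve $Z\in\Sigma$ with $Z\subset\mathcal E$ the germ $S_{\mathcal E}$ contains $Z$. Once we know $\mathcal E\subseteq S_{\mathcal E}\cap E\subseteq\mathcal U_{\mathcal F,E}$ and that every irreducible component of $S_{\mathcal E}\cap E$ meets $\mathcal E$, the fact that $\mathcal E$ is a \emph{connected component} of $\mathcal U_{\mathcal F,E}$ forces equality. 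So the real work is to control where $S_{\mathcal E}$ touches $E$, splitting according to whether a component $\Gamma$ of $S_{\mathcal E}\cap E$ sits in $E_{\text{inv}}$ or in $E_{\text{dic}}$.

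For a component $\Gamma\subseteq E_{\text{inv}}$, I would argue that $\Gamma$ lies in a single invariant irreducible component $E_i$ of $E$, so that $\Gamma\subseteq S_{\mathcal E}\cap E_i$ is the intersection of the two distinct invariant surfaces $S_{\mathcal E}\not\subset E$ and $E_i$. By Frobenius, a nonsingular point carries a unique germ of invariant hypersurface, so two distinct invariant hypersurfaces meet inside $\text{Sing}(\mathcal F)$; hence $\Gamma\subseteq\text{Sing}(\mathcal F)$. Since $S_{\mathcal E}\not\subset E$ cannot contain a corner curve, the generic point of $\Gamma$ lies in only the component $E_i$, and $S_{\mathcal E}$ provides a nonsingular invariant hypersurface not contained in $E$ through it; thus this point is a simple trace singularity for $(\mathcal F,E)$ and $\Gamma\subseteq\mathcal T_{\mathcal F,E}\subseteq\mathcal U_{\mathcal F,E}$. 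This is the same mechanism as in the non-dicritical construction and presents no essential difficulty.

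The crux is the dicritical case. Let $F$ be a dicritical component meeting $S_{\mathcal E}$ and let $C$ be a component of $S_{\mathcal E}\cap F$. As $F$ is generically transverse, $(F,E|_F;\mathcal F|_F)$ is a desingularized foliated surface, and $C$ is invariant for $\mathcal F|_F$ — any $\phi:(\mathbb C,0)\to C\subset F$ satisfies $\phi^*\omega=\phi^*(\omega|_F)=0$ — hence invariant for $\mathcal F$. Because $S_{\mathcal E}$ is a germ along $\mathcal E\cap K$, the component $C$ meets $\mathcal E\cap F$; at a connecting point $p$ that is singular (the trace locus reaching $F$ through a corner $F\cap E_i$), $p$ is a trace type singularity of $(F,E|_F;\mathcal F|_F)$, with $E_i\cap F$ the invariant divisor branch and $C$ the extra separatrix. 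By Remark \ref{rem:isolateddesingularized} the branch $(C,p)$ is then isolated for $(\mathcal F|_F,E|_F)$, and I invoke the \emph{prolongation property} assumed for $F$: via the reformulation in Remark \ref{lema:prolongacionetapafinal}, $(C,p)$ extends to a unique closed irreducible curve $Y\subset F$ with $Y\cap (E|_F)_{\text{dic}}=\emptyset$. I would then verify the three conditions \eqref{eq:sigma} for $Y$ (closed irreducible invariant, $Y\subset E_{\text{dic}}$, $Y\not\subset E_{\text{inv}}$ since it extends $C\not\subset E|_F$, and $p\in Y\cap\text{Sing}(\mathcal F)$), so $Y\in\Sigma$ and $Y\subseteq\tilde\Sigma$. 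Finally I match the germ to the closed curve: at nonsingular points of $\mathcal F$ on $Y$ the surface $S_{\mathcal E}$ is the unique leaf and cuts $F$ exactly along $Y$, while at the points of $Y\cap E|_F$, trace type by prolongation, the unique non-divisor invariant branch is the one of $Y$; hence no extra branch of $C$ appears and $C=Y\in\tilde\Sigma$, with $Y\subseteq\mathcal E$ through the common point $p$.

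The delicate point, and the reason the prolongation hypothesis is indispensable, is precisely this closing-up of $C$ into the compact curve $Y$ inside $F$ without leaking into further dicritical structure: the condition $Y\cap (E|_F)_{\text{dic}}=\emptyset$ guarantees that $S_{\mathcal E}$ crosses the divisor of $F$ only at trace type points, from which it propagates into adjacent invariant components along trace loci — already handled by the invariant case — and never along an uncontrolled dicritical corner. This is exactly the phenomenon that fails for Jouanolou-type examples, where $S_{\mathcal E}$ is only a non-closed germ. Assembling the pieces, $S_{\mathcal E}\cap E$ is a curve built from trace loci in $E_{\text{inv}}$ and from closed curves $Y\in\tilde\Sigma$ in $E_{\text{dic}}$, all glued along common points of $\text{Sing}(\mathcal F)$; it is contained in $\mathcal U_{\mathcal F,E}$, contains $\mathcal E$, and each of its components meets $\mathcal E$, so since $\mathcal E$ is a connected component of $\mathcal U_{\mathcal F,E}$ we conclude $S_{\mathcal E}\cap E=\mathcal E$, that is, $\mathcal E$ is complete.
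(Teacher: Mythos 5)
Your global strategy (prove $\mathcal E\subseteq S_{\mathcal E}\cap E\subseteq \mathcal U_{\mathcal F,E}$, note every component of $S_{\mathcal E}\cap E$ meets $\mathcal E$, and conclude by connectedness, since $\mathcal E$ is a connected component of $\mathcal U_{\mathcal F,E}$) is viable and in substance parallel to the paper's proof, which argues pointwise at each $q\in\mathcal E\cap K$ but uses the same two mechanisms: trace curves inside invariant components, and prolongation of isolated branches inside dicritical components. However, there is a genuine gap in your dicritical case: you work ``at a connecting point $p$ that is singular'' without justifying that such a point exists. A priori, $\mathcal E$ can reach the dicritical component $F$ through a curve $Z\in\Sigma$ lying in a \emph{different} dicritical component $F'$, $Z\not\subset F$, crossing $F$ at a point $p\in F\cap F'$. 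Since both local components of $E$ at $p$ are dicritical, $p$ is a \emph{regular} point of $\mathcal F$; the component $C$ of $S_{\mathcal E}\cap F$ through $p$ is just the trace on $F$ of the leaf through $p$, it is attached to $\mathcal E$ only at $p$, meets no singularity, and follows no curve of $\Sigma$. None of your tools (isolated branches, Remark \ref{rem:isolateddesingularized}, the prolongation property) applies to it, so you cannot conclude $C\subset\mathcal U_{\mathcal F,E}$, and the final connectedness argument does not close.

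This is precisely the configuration that the paper spends the first half of its dicritical case excluding (``the first observation is that $G$ must be invariant''), and the exclusion again uses the prolongation hypothesis: if $q\in\mathcal E$ lies on two dicritical components, then $q$ is regular, hence $q\in Z$ for some $Z\in\Sigma$, say $Z\subset F'$; choosing $p'\in Z\cap \text{Sing}(\mathcal F)$, which exists by the conditions in Equation \ref{eq:sigma}, the branch $(Z,p')$ is isolated for $(F',E|_{F'};\mathcal F|_{F'})$ and $Z$ is its unique closed irreducible extension, so by Remark \ref{lema:prolongacionetapafinal} it must satisfy $Z\cap (E|_{F'})_{\text{dic}}=\emptyset$ --- contradicting $q\in Z\cap F|_{F'}$, which is a dicritical component of $E|_{F'}$. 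Hence dicritical--dicritical crossings never occur on $\mathcal E$, and only after this is it legitimate to assume, as you do, that every component of $S_{\mathcal E}\cap F$ connects to $\mathcal E$ either at a trace-type singularity or along a curve of $\Sigma$ inside $F$ itself (which then leads to one, again by Equation \ref{eq:sigma}). The repair needs only tools you already invoke --- the prolongation property in the form of Remark \ref{lema:prolongacionetapafinal} together with uniqueness of the closed extension of a branch --- but as written your case analysis is incomplete.
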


\begin{proof} 
Let us recall the immersion of germs $(S_{\mathcal E}, {\mathcal E}\cap K)\subset (M,K)$. The extended partial separatrix $\mathcal E$ is complete if and only if we have that $S_{\mathcal E}\cap E={\mathcal E}$. This property is equivalent to show that
$$
(S_\mathcal{E},q)\cap E=(\mathcal{E},q)
$$
for every $q\in \mathcal{E}\cap K$. Recalling that $e_q(E)$ denotes the number of irreducible components of $E$ through $q$, we have that $1\leq e_q(E)\leq 2$, since there are no corner points in $\mathcal{E}$.

Assume that $e_q(E)=1$. There is a germ of nonsingular vector field that trivializes the foliation and the divisor; the result follows from two-dimensional considerations.

Let us assume now that $e_q(E)=2$. If the irreducible components of $E$ through $q$ are both invariant, we are in the non-dicritical case and we can argue as in \cite{Can-C, Can-R}. Let us suppose now that there is at least one dicritical component $F$ through $q$ and denote by $G$ the other one.

The first observation is that $G$ must be invariant. Let us see this. Assume by contradiction that both $F$ and $G$ are dicritical; in this case, we know that $q$ is a regular point. Recall that ${\mathcal E}$ is a union of irreducible components of ${\mathcal T}_{{\mathcal F}, E}$ and closed irreducible curves $Z$ satisfying the properties in Equation \ref{eq:sigma}. Noting that ${\mathcal T}_{{\mathcal F}, E}$ is contained in the singular locus, there is a $Z$ satisfying the properties in Equation \ref{eq:sigma} such that $q\in Z$. Up to change the role of $F$ and $G$, we can assume $Z \subset F$. We know that there is a point $p \in \text{Sing}(\mathcal{F}) \cap Z$. Note that $e_p(E_{\text{inv}})=1$ and thus it is a trace type singularity for $(F,E\vert_F;{\mathcal F}\vert_F)$. Moreover $(Z,p)$ is the unique isolated branch for $(F,E\vert_F;{\mathcal F}\vert_F)$ at $p$. In particular $Z\subset F$ extends the isolated branch $(Z,p)$. In view of the statement of the ``prolongation property for isolated branches'' in a desingularized situation, stated in Remark \ref{lema:prolongacionetapafinal}, we should have that $Z$ does not intersect the dicritical components for the restricted foliated surface $(F, E|_{F}; \mathcal{F}|_{F})$. This is a contradiction, since $q\in G|_{F}$, which is one of such dicritical components.

Now, we have that $G$ is invariant. There is a closed irreducible curve $Y \subset G$, with $q\in Y$, such that $(Y,q)=(\mathcal{T}_{\mathcal{F},E},q)$. The foliated surface $(F,E|_{F};\mathcal{F}|_{F})$ is desingularized and $q\in \text{Sing}(\mathcal{F}|_{F})$ is of trace type. The unique branch $(\Gamma,q)\not\subset (E|_{F},q)$ invariant for $\mathcal{F}|_{F}$ extends to a  closed irreducible curve $Z\subset F$, by the prolongation property for isolated branches. We have that $Z$ satisfies the conditions in Equation \ref{eq:sigma} and hence $Z\subset {\mathcal E}$. We conclude that $(S_\mathcal{E},q) \cap E=(Y \cup Z,q)=(\mathcal{E},q)$.
\end{proof}

\begin{rem} 
In Lemma \ref{lema:nocompacto} and Corollary \ref{cor:compacto} we show that the prolongation property in the statement of Proposition \ref{prop:prolongacionimplicacompletitud} holds, when we start the reduction of singularities with three dicritical components in the divisor. If we start with only two components, this property is not assured and in this case we can find a linear chain of dicritical components.
\end{rem}

\section{Invariant Surfaces for Toric Type Foliations}
In this section we give an outline of the proof of Theorem \ref{teo:main} that will be completed in Sections \ref{sec:Prolongation Property in Toric Type Foliated Surfaces}, \ref{sec:Corner Type Points under Combinatorial Blowing-ups} and \ref{sec:Totally dicritical case}.

Let us consider a germ of complex hyperbolic foliation $\mathcal{F}_0$ on $(\mathbb{C}^3,0)$, that is of toric type with respect to a normal crossings divisor $E^0$. Recall that we have a combinatorial reduction of singularities
\begin{equation}
\label{eq:sigmasigma}
\sigma:((M,\sigma^{-1}(0)),E;\mathcal{F}) \to ((\mathbb{C}^3,0),E^0;\mathcal{F}_0).
\end{equation}
Let us do some evident reductions of the problem. If $E^0_\text{inv}\ne \emptyset$, there is an invariant surface for $\mathcal{F}_0$ contained in $E^0$ and we are done. On the other hand, if the number of irreducible components of $E^0$ is zero or one, we have that $\sigma$ is the identity morphism, since it is combinatorial. As a consequence, the origin is a simple point and we are also done. Hence, we can assume that $E^0=E^0_\text{dic}$ and that $e_0(E^0)\geq 2$.

We consider two cases for the proof of Theorem \ref{teo:main}: $e_0(E^0)=2$ and $e_0(E^0)=3$.
In both cases, we assume implicitly that all the components of $E^0$ are dicritical.

The proof in the case $e_0(E^0)=3$ is developed in Sections \ref{sec:Prolongation Property in Toric Type Foliated Surfaces} and  \ref{sec:Corner Type Points under Combinatorial Blowing-ups}. The statement proved in that sections is the following one:
\begin{prop} 
\label{prop:propiedadprolongaciontipotorico}
Consider a foliated space $(({\mathbb C}^3,0),E^0;{\mathcal F}_0)$, where ${\mathcal F}_0$ is a complex hyperbolic foliation and $E^0$ has three dicritical irreducible components. Assume that
$$
\sigma: ((M,\sigma^{-1}(0)),E;{\mathcal F})\rightarrow (({\mathbb C}^3,0),E^0;{\mathcal F}_0)
$$
is a combinatorial reduction of singularities. We have:
\begin{enumerate}
\item[a)] Every extended partial separatrix of $(M,E; \mathcal{F})$ is complete.
\item[b)] There is at least one extended partial separatrix of $(M,E; \mathcal{F})$.
\end{enumerate}
\end{prop}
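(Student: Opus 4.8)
For part (a) I would apply Proposition \ref{prop:prolongacionimplicacompletitud} directly, so the only thing to check is its hypothesis: that for every dicritical component $F$ of $E$ the restricted foliated surface $(F,E|_F;\mathcal{F}|_F)$ has the prolongation property for isolated branches. Here the combinatorial nature of $\sigma$ is essential, and I would split the dicritical components into the compact and the non-compact ones. A compact dicritical component is, by the toric structure of a combinatorial reduction, a nonsingular projective toric surface carrying a normal crossings divisor $E|_F$ compatible with the ambient one, and $\mathcal{F}|_F$ is then a toric type foliation on it; the prolongation property in this situation is precisely the theorem of \cite{Mol}, which yields Corollary \ref{cor:compacto}. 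The non-compact dicritical components are handled by a direct argument, using the product structure transverse to the germification set, which is the content of Lemma \ref{lema:nocompacto}. Once the hypothesis is verified for all dicritical components, Proposition \ref{prop:prolongacionimplicacompletitud} gives the completeness of every extended partial separatrix.

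For part (b) it is enough to exhibit a single trace type simple singularity, since then $\mathcal{T}_{\mathcal{F},E}\subset \mathcal{U}_{\mathcal{F},E}$ is nonempty and at least one of its connected components is an extended partial separatrix. A trace type point can only sit on $E_{\text{inv}}$ (Remark \ref{rem:tipotraza}), so the argument naturally splits into two layers. First, a combinatorial analysis of the fan subdivision underlying $\sigma$: starting from three dicritical rays, I would show that the desingularized model $E$ cannot have all of its components dicritical, so that $E_{\text{inv}}\neq\emptyset$. Second, once an invariant component $G\subset E$ is available, I would restrict to the foliated surface $(G,E|_G;\mathcal{F}|_G)$ and carry out a Camacho-Sad index computation along the compact invariant curves of $E|_G$, in the refined form due to Ortiz-Rosales-Voronin \cite{Ort-R-V}, to force an isolated invariant branch not contained in the divisor, which by Remark \ref{rem:isolateddesingularized} is the same as a trace type point. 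This occupies Section \ref{sec:Corner Type Points under Combinatorial Blowing-ups}.

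The main obstacle I anticipate is this second layer of (b): the mere existence of an invariant component $G$ does not by itself produce a trace type point, because a priori the part of $\text{Sing}(\mathcal{F})$ met by $G$ could consist only of corners arising from intersections with other invariant components. Ruling this out is exactly where the distribution of invariant branches enters, and the Ortiz-Rosales-Voronin refinement of Camacho-Sad is what forbids all the index contributions along a compact invariant curve to be absorbed by corners, thereby guaranteeing a trace type singularity. The combinatorial step showing $E_{\text{inv}}\neq\emptyset$ is a secondary difficulty, and it is genuinely specific to the hypothesis $e_0(E^0)=3$: for $e_0(E^0)=2$ the analogous conclusion need not hold, which is precisely why that case is treated separately in Section \ref{sec:Totally dicritical case}.
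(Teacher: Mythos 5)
Your treatment of part a) is exactly the paper's: verify the hypothesis of Proposition \ref{prop:prolongacionimplicacompletitud} by splitting the dicritical components into the non-compact ones (Lemma \ref{lema:nocompacto}) and the compact ones, which are nonsingular projective toric surfaces (Lemma \ref{lema:superficietorica}) so that the results of \cite{Mol} give the prolongation property (Corollary \ref{cor:compacto}). No issues there.

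Part b), however, has a genuine gap in your second layer. You propose, once an invariant component $G\subset E_{\text{inv}}$ is found, to ``restrict to the foliated surface $(G,E|_G;\mathcal{F}|_G)$'' and run the Ortiz--Rosales--Voronin index argument there. But if $G$ is invariant, there is no restricted foliation: $G$ is a leaf (away from $\mathrm{Sing}(\mathcal{F})$), the pull-back of any local generator $\omega$ of $\mathcal{F}$ to $G$ vanishes identically, so $(G,E|_G;\mathcal{F}|_G)$ is not a foliated surface and there are no Camacho--Sad indices of a foliation on $G$ to sum. Moreover, the version of the refined Camacho--Sad theorem from \cite{Ort-R-V} used in the paper applies to a foliated surface $(S,D;\mathcal{G})$ obtained from a complex hyperbolic plane germ $((\mathbb{C}^2,0),D^0;\mathcal{G}_0)$ by a sequence of blowing-ups, and a compact invariant component $G$ of $E$ admits no such two-dimensional model. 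The paper's key twist, which your proposal misses, is that ORV is applied only on surfaces \emph{generically transversal} to $\mathcal{F}$, where the invariant components of $E$ cut out the invariant curves of the genuinely restricted foliation: (i) in Lemma \ref{lema:lema1}, on the strict transform $\Delta_c$ of a generic slice $(y=c)$ transverse to a curve $\Gamma=E^0_j\cap E^0_k$ dominated by an invariant component, which is a blow-up tower over a plane germ so that ORV applies and yields a trace singularity of the three-dimensional space; (ii) in the final step, on the dicritical strict transform $E_1$ of $E^0_1$, for which $(E_1,E|_{E_1};\mathcal{F}|_{E_1})$ is a desingularized foliated surface over $(E^0_1,E^0|_{E^0_1};\mathcal{F}_0|_{E^0_1})$.

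Your first layer is also not viable as stated: whether a component of $E$ is invariant or dicritical is not a combinatorial property of the fan subdivision underlying $\sigma$ --- it depends on $\mathcal{F}_0$ --- so no ``fan analysis'' alone can show $E_{\text{inv}}\neq\emptyset$. The paper extracts invariant components from the structure of simple points instead: at a crossing point of two divisor curves in a desingularized foliated surface at least one of them must be invariant, so from the existence of points $p\in E_1$ with $e_p(E|_{E_1})=2$, and assuming the branches $\Gamma_2,\Gamma_3$ are not invariant, one gets $(E|_{E_1})_{\text{inv}}=\tilde{E}_{\text{inv}}\cap E_1\neq\emptyset$ and applies ORV on $E_1$; while if $\Gamma_2$ or $\Gamma_3$ is invariant, one gets $D^j_{\text{inv}}\neq\emptyset$ and concludes by the slice argument of Lemma \ref{lema:lema1}. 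So the correct dichotomy is not ``$E_{\text{inv}}\neq\emptyset$, then work on $G$,'' but rather ``invariant component dominating a curve (transversal slices) versus compact invariant curves inside the dicritical surface $E_1$ (ORV on $E_1$).'' Your closing intuition --- that ORV forbids all index contributions from being absorbed by corners --- is the right heuristic, but it must be implemented on a transversal surface, not on the invariant component itself.
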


Proposition \ref{prop:propiedadprolongaciontipotorico} gives Theorem \ref{teo:main} in case $e_0(E^0)=3$ as follows. Take an extended partial separatrix $\mathcal E$ assured by part b) of  Proposition \ref{prop:propiedadprolongaciontipotorico}. By the part a) of Proposition
\ref{prop:propiedadprolongaciontipotorico}, we know that $\mathcal E$ is complete and hence we get a closed surface $S_\mathcal{E}\subset M$ invariant for $\mathcal{F}$. Applying Remmert's Proper Mapping Theorem \cite{Rem}, we obtain a surface $\sigma(S_\mathcal{E})$ of $(\mathbb{C}^3,0)$ invariant for $\mathcal{F}_0$, and we are done.

The case $e_0(E^0)=2$ is done by direct arguments in Section \ref{sec:Totally dicritical case}.

\section{Completeness of Extended Partial Separatrices} 
\label{sec:Prolongation Property in Toric Type Foliated Surfaces}

This section is devoted to the proof of Proposition \ref{prop:propiedadprolongaciontipotorico} a). In view of Proposition \ref{prop:prolongacionimplicacompletitud}, it is enough to see that the foliated surface $(F, E|_{F};\mathcal{F}|_{F})$ has the prolongation property for isolated branches, for each dicritical component $F$ of $E$. This fact follows from Lemma \ref{lema:nocompacto} and Corollary \ref{cor:compacto} below.

Let us recall that we are assuming that $e_0(E^0)=3$, that the three irreducible components of $E^0$ are dicritical and that $\sigma$ is a combinatorial reduction of singularities of the complex hyperbolic foliation ${\mathcal F}_0$ as in Equation \ref{eq:sigmasigma}.
\begin{lema}
\label{lema:nocompacto}
Let  $F$ be a non-compact dicritical component of $E$. Then, the foliated surface $(F, E|_{F}; \mathcal{F}|_{F})$ has the prolongation property for isolated branches.
\end{lema}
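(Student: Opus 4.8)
The plan is to reduce the statement to the two-dimensional reformulation in Remark~\ref{lema:prolongacionetapafinal} and then to exploit the fact that a non-compact dicritical component projects down onto a germ of foliated plane, where isolated separatrices descend to genuine plane branches. First I would check that the restriction $(F,E|_{F};\mathcal{F}|_{F})$ is itself a desingularized complex hyperbolic foliated surface: simple points of $(M,E;\mathcal{F})$ restrict to simple points of the dicritical component $F$, and the absence of saddle-nodes is inherited by the restriction. By Remark~\ref{lema:prolongacionetapafinal} it then suffices to prove the equivalent statement~(2): for every trace type singularity $p$ of $(F,E|_{F};\mathcal{F}|_{F})$ there is a closed irreducible curve $Y\subset F$ extending the isolated branch through $p$ with $Y\cap (E|_{F})_{\text{dic}}=\emptyset$.

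Next I would describe the structure of $F$. Since $F$ is non-compact, it is the strict transform of one of the three initial components of $E^0$, say $F_0=(x_1=0)\cong(\mathbb{C}^2,0)$. Restricting $\sigma$ to $F$ yields a proper morphism $\rho=\sigma|_{F}\colon F\to F_0$ that is a composition of combinatorial point blowing-ups, i.e.\ a toric modification of $(\mathbb{C}^2,0)$ relative to the normal crossings divisor $D^0=E^0|_{F_0}=(x_2x_3=0)$, the two coordinate axes; here $F$ is a germ along the compact fibre $\rho^{-1}(0)=\sigma^{-1}(0)\cap F$, and $\mathcal{F}|_{F}$ is the transform by $\rho$ of the plane foliation $\mathcal{G}_0=\mathcal{F}_0|_{F_0}$. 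Now I would fix a trace type singularity $p$, with isolated branch $(\Gamma,p)$, $\Gamma\not\subset E|_{F}$. As $p$ is a simple point of a complex hyperbolic surface, $\Gamma$ is a convergent branch, so its image $\gamma=\rho(\Gamma)$ is a branch of curve at $0\in F_0$, invariant for $\mathcal{G}_0$ and not contained in $D^0$. I would then take $Y\subset F$ to be the strict transform of $\gamma$ by $\rho$; it is a closed irreducible curve extending $(\Gamma,p)$.

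The main obstacle is the last step: controlling the intersection $Y\cap E|_{F}$. The strict transform of an irreducible branch meets the exceptional fibre $\rho^{-1}(0)$ at a single infinitely near point, which here is exactly $p$; and since $\gamma\not\subset D^0$ is a branch at the origin, $Y$ cannot meet the strict transforms of the coordinate axes---the non-compact components of $E|_{F}$---inside the germ $F$. Hence $Y$ meets $E|_{F}$ only at $p$, along the divisor component(s) through $p$. To conclude, I would invoke the local model of a simple point: in dimension two a trace type point carries no dicritical component of the divisor, since in the normal form $E_{\text{dic}}\subset\cup_{j=\tau+1}^{n}(x_j=0)$ is empty when $n=2$ and $\tau=2$. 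Thus the divisor component through $p$ is invariant and $Y\cap (E|_{F})_{\text{dic}}=\emptyset$, which is precisely statement~(2). The delicate points to verify carefully are the convergence of $\gamma$ and that $Y$ avoids every dicritical component; both reduce to the fact that $\Gamma$ is a separatrix transverse to a single invariant component at the trace point $p$, together with the clean behaviour of strict transforms of branches under the proper modification $\rho$.
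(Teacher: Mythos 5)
Your proof contains a genuine gap at its very first structural step: the claim that ``since $F$ is non-compact, it is the strict transform of one of the three initial components of $E^0$'' is false. A component $F$ of $E$ is compact exactly when $\sigma(F)=\{0\}$, so the non-compact components come in \emph{two} kinds: the strict transforms $E_1,E_2,E_3$ of the initial planes, but also the exceptional components lying over the axes $\Gamma^0_i=E^0_j\cap E^0_k$, i.e.\ those contained in $\overline{\sigma^{-1}(\Gamma^0_i\setminus\{0\})}$. Components of the second kind appear as soon as a one-dimensional combinatorial center is used (they are precisely the chains $\{D_i\}$ appearing in Lemma \ref{lema:lema1} and in Section \ref{sec:Totally dicritical case}), and nothing prevents them from being dicritical. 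For such an $F$, the restriction $\sigma|_F$ maps $F$ onto a \emph{curve} germ, not onto $(\mathbb{C}^2,0)$: there is no modification $\rho\colon F\to F_0$ of a plane germ, no image branch $\gamma$ at an origin, and no strict transform $Y$, so your entire mechanism (push down, take $\gamma=\rho(\Gamma)$, lift its strict transform) has no analogue in this case. The lemma, however, must cover these components, and the paper's proof does.

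The paper argues uniformly and more directly: using only that $\sigma$ is combinatorial and $e_0(E^0)=3$, it shows that $\sigma^{-1}(0)$ is a connected union of compact components of $E$ and compact curves that are intersections of two components, whence $F\cap\sigma^{-1}(0)\subset E|_F$ for every non-compact $F$. An isolated branch $(\Gamma,p)$ satisfies $\Gamma\not\subset E|_F$, hence $\Gamma\not\subset\sigma^{-1}(0)$; since $F$ is a germ along the compact set $F\cap\sigma^{-1}(0)$, the branch is then already a closed irreducible curve in $F$ --- it ``extends itself'' --- meeting $E|_F$ only at $p$, where it is isolated by hypothesis. Note that in the case you do treat, your curve $Y$ is nothing but the closure of $\Gamma$, so your argument is a longer route to the same conclusion; within that case it is essentially correct, modulo details you flag but do not prove (that $\sigma$ induces on $F$ only point blowing-ups lying over the origin of $F_0$, and that the components of $E|_F$ to be avoided include not only the strict transforms of the two axes but also the curves $G\cap F$ for exceptional components $G$ over the axes --- your observation that $\gamma$ meets the axes only at $0$ does handle these). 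Your reduction to statement (2) of Remark \ref{lema:prolongacionetapafinal}, and the remark that no dicritical component passes through a two-dimensional trace singularity, are both sound. But as written the proof establishes the lemma only for the three strict-transform components, and repairing it for the components over the axes essentially forces the paper's self-extension argument.
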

\begin{proof} 
Let us first show that $F\cap \sigma^{-1}(0)\subset E\vert_F$, where $E\vert_F\subset F$ is the union of the intersections with $F$ of the other components of $E$. Note that $\sigma$ is not the identity morphism, since the intersection of three dicritical components cannot be a simple point. Thus, we have performed at least one blowing-up. In this situation, recalling that $e_0(E^0)=3$ and that we only perform combinatorial blowing-ups, we see that the compact set $\sigma^{-1}(0)$ is a connected union of components of $E$ and compact irreducible curves that are the intersection of two components of $E$. Now, we have to show that given a point $p\in F\cap \sigma^{-1}(0)$ there is an irreducible component $G$ of $E$ with $G\ne F$ and such that $p\in G$. There is a component $G$ of $E$ with $p\in G$ such that either $G$ is a compact component $G\subset \sigma^{-1}(0)$, and hence $G\ne F$ or $G$ is one of the two components of $E$ defining $\sigma^{-1}(0)$ locally at $p$, in this case we can also take $G\ne F$.

Take an isolated branch $(\Gamma,p)$ for $(F, E|_{F};\mathcal{F}|_{F})$, with $p\in F\cap \sigma^{-1}(0)$. We have that $\Gamma\not\subset \sigma^{-1}(0)$, otherwise we would have that $\Gamma\subset E\vert_F$, but this is not possible for an isolated branch. Hence
$$
(\Gamma,p)\subset (F,F\cap \sigma^{-1}(0))
$$ 
is a closed immersion and it extends itself, satisfying in addition that $p$ is the only point in $\Gamma\cap E\vert_F$.
\end{proof}
\begin{lema} 
\label{lema:superficietorica}
Every compact component $F$ of $E$ is a nonsingular projective toric surface, where the restriction $E|_{F}$ is the natural divisor given by the torus action.
\end{lema}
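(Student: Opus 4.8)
The plan is to forget the foliation entirely and argue purely in toric geometry, since the statement concerns only the ambient combinatorial structure of $\sigma$ and $E$. First I would fix coordinates $(x_1,x_2,x_3)$ so that $E^0=(x_1x_2x_3=0)$; this identifies the germ of $\mathbb{C}^3$ at the origin with the germ at the distinguished point of the smooth affine toric variety $U_{\sigma_0}$ attached to the cone $\sigma_0=\mathbb{R}_{\geq 0}^3$ in $N_\mathbb{R}=\mathbb{R}^3$, with torus $T=(\mathbb{C}^*)^3$ and $E^0$ its $T$-invariant boundary divisor. The centers of the combinatorial blowing-ups are closures of strata of $E$, hence $T$-invariant subvarieties, that is, orbit closures $V(\tau)$ attached to faces $\tau$ of the current fan. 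Blowing up such a center is the toric \emph{star subdivision} of $\tau$ at the sum of the primitive generators of its rays; this is the standard fact I would record (the blow-up of the origin corresponds to subdivision at $(1,1,1)$, the blow-up of a coordinate axis to subdivision at the relevant $e_i+e_j$).

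Second, I would observe that the star subdivision of a regular (smooth) cone at the sum of its primitive generators again produces a regular fan. By induction along the combinatorial sequence $\sigma$, the fan $\Sigma$ subdividing $\sigma_0$ is regular, so $M$ --- realized as the germ along $\sigma^{-1}(0)$ of the smooth toric variety $X_\Sigma$ --- is nonsingular and toric, with $E=\sigma^{-1}(E^0)$ its $T$-invariant boundary. The irreducible components of $E$ are then exactly the orbit closures $D_\rho=V(\rho)$ for the rays $\rho$ of $\Sigma$. Each $D_\rho$ is itself a nonsingular toric surface, with fan $\operatorname{Star}(\rho)$ in $N(\rho)_\mathbb{R}=N_\mathbb{R}/\mathbb{R}\rho\cong\mathbb{R}^2$, and the restriction $E|_{D_\rho}$ equals $\bigcup_{\rho\prec\sigma,\ \dim\sigma=2} D_\sigma\cap D_\rho$, which is precisely the natural divisor induced by the torus action on $D_\rho$. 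This already settles the ``nonsingular toric surface with its natural divisor'' part for \emph{every} component.

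Third, I would pin down which components are compact and upgrade completeness to projectivity. Since $\sigma$ is a composition of blowing-ups it is proper, so $\sigma^{-1}(0)$ is compact and a component $F=D_\rho$ is compact if and only if $F\subset\sigma^{-1}(0)$, i.e. $\sigma(F)=\{0\}$. In toric terms $\sigma(D_\rho)$ is the orbit closure $V(\tau)$ in $U_{\sigma_0}$ attached to the minimal face $\tau$ of $\sigma_0$ whose relative interior contains $\rho$, and this is the point $\{0\}$ exactly when $\tau=\sigma_0$, that is, when $\rho$ lies in the relative interior of $\sigma_0$. Thus the compact components are the $D_\rho$ with $\rho$ interior, and for these $\operatorname{Star}(\rho)$ is a complete fan, so $D_\rho$ is a complete nonsingular toric surface. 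Finally I would invoke the classical fact that every complete nonsingular toric surface is projective, which finishes the proof.

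The main technical point, rather than a deep obstacle, is the bookkeeping in the first two steps: verifying that each admissible combinatorial center is a $T$-orbit closure and that the associated star subdivision both preserves regularity and matches the blowing-up scheme-theoretically, so that the whole $\sigma$ is genuinely toric. A secondary care is the germ-versus-global issue: $M$ is only a germ along $K=\sigma^{-1}(0)$, but every compact component $F$ lies entirely inside $K$, so $F$ coincides with the corresponding complete orbit closure of the global toric variety $X_\Sigma$ and is a bona fide projective surface; no essential difficulty arises here.
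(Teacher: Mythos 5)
Your proof is correct, but it follows a genuinely different route from the paper. The paper avoids all fan computations by compactifying: it chooses coordinates with $E^0=(x_1x_2x_3=0)$, embeds $(\mathbb{C}^3,0)$ in $\mathbb{P}^3_{\mathbb{C}}$ via $(a_1,a_2,a_3)\mapsto[1,a_1,a_2,a_3]$, takes $H$ to be the four coordinate planes, and lifts the combinatorial sequence $\sigma$ to an equivariant sequence $\tilde\sigma:(\widetilde{\mathbb{P}}^3_{\mathbb{C}},\tilde H)\to(\mathbb{P}^3_{\mathbb{C}},H)$; each compact component $F$ of $E$ is then an irreducible component of $\tilde H$ with $E|_F=\tilde H|_F$, so projectivity is inherited directly from the ambient projective toric three-fold and requires no further argument. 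You instead stay intrinsic: you realize $\sigma$ as a sequence of star subdivisions of the regular fan of the octant, check regularity is preserved, identify the compact components as the orbit closures $D_\rho$ for rays $\rho$ interior to $\sigma_0$ (so that $\operatorname{Star}(\rho)$ is a complete fan in $N_{\mathbb{R}}/\mathbb{R}\rho$), and conclude projectivity from the classical fact that every complete nonsingular toric surface is projective. Your version buys a sharper combinatorial dividend --- an exact characterization of which components are compact, and the explicit fan of each $F$ --- at the cost of invoking that last classical theorem, which is genuinely two-dimensional (complete smooth toric three-folds need not be projective, so your argument would not generalize verbatim, whereas the paper's compactification trick would). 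Both arguments share the same underlying bookkeeping, which you correctly flag: verifying that the combinatorial centers, defined stratum-by-stratum on the germ along $\sigma^{-1}(0)$, agree near the core with full torus-orbit closures so that the blow-ups are honestly toric; the paper compresses exactly this point into the phrase ``lifts to a combinatorial (equivariant) sequence of blowing-ups.'' Your handling of the germ-versus-global issue (compact components lie inside $\sigma^{-1}(0)$, hence coincide with orbit closures of the global model) is also sound and matches the paper's implicit use of $\sigma(F)=\{0\}$.
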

\begin{proof}
Note that an irreducible component $F$ of $E$ is compact if and only if $\sigma(F)=\{0\}$.	
Consider a local coordinate system $(x_1,x_2,x_3)$ at the origin of $\mathbb{C}^3$ such that $E^0=(x_1x_2x_3=0)$. This allows us to give an immersion of $(\mathbb{C}^3,0)$ in $\mathbb{P}_\mathbb{C}^3$ as follows:
$$
(a_1,a_2,a_3)\mapsto [1,a_1,a_2,a_3].
$$
Let $H=H_0 \cup H_1 \cup H_2 \cup H_3$ be the union of the coordinate planes of $\mathbb{P}_\mathbb{C}^3$, in such a way that $H_i \cap (\mathbb{C}^3,0)=(x_i=0)$, for $i=1,2,3$. The projective space $\mathbb{P}_\mathbb{C}^3$ has a structure of toric variety, where $H$ is the divisor provided by the torus action. The combinatorial sequence of blowing-ups $\sigma:((M,\sigma^{-1}(0)),E) \to (({\mathbb C}^3,0),E^0)$ lifts to a combinatorial (equivariant) sequence of blowing-ups
$$
\tilde{\sigma}:(\widetilde{\mathbb{P}}_\mathbb{C}^3,\tilde{H}) \to (\mathbb{P}_\mathbb{C}^3,H).
$$
Each compact irreducible component $F$ of $E$ is an irreducible component of $\tilde{H}$ and we have that $E|_{F}=\tilde{H}|_{F}$. Hence $F$ is a toric surface and the restriction $E|_{F}$ is the divisor defined by the torus action.
\end{proof}
\begin{cor}
\label{cor:compacto}
Let  $F$ be a compact dicritical component of $E$. Then, the foliated surface $(F, E|_{F}; \mathcal{F}|_{F})$ has the prolongation property for isolated branches.
\end{cor}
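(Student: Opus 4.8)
The plan is to transfer the prolongation property, already established in \cite{Mol} for toric type foliations on projective toric surfaces, to the compact dicritical component $F$. By Lemma \ref{lema:superficietorica} we already know that $F$ is a nonsingular projective toric surface and that $E|_F$ is exactly the divisor given by the torus action. Hence the only missing ingredient is to check that the restricted foliated surface $(F, E|_F; \mathcal{F}|_F)$ is itself \emph{of toric type} with respect to $E|_F$; once this is done, the result of \cite{Mol} applies and yields the prolongation property for isolated branches. So the corollary reduces to identifying $\mathcal{F}|_F$ as a toric type foliation.

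To establish this I would first verify that $(F, E|_F; \mathcal{F}|_F)$ is already desingularized. Take any $p\in F$; since $(M,E;\mathcal{F})$ is desingularized, $p$ is a simple point. Writing a local generator $\omega=(\prod_{j=1}^{\tau}x_j)\eta$ with $\eta=\sum_{j=1}^{\tau}a_j(x_1,\ldots,x_\tau)\,dx_j/x_j$, the dicritical hypothesis and the inclusion $E_{\text{dic}}\subset\cup_{j>\tau}(x_j=0)$ force $F=(x_{j_0}=0)$ for some index $j_0>\tau$. The key observation is that $\omega$ does not involve the coordinate $x_{j_0}$, so the restriction keeps exactly the same local model on $F$; therefore $p$ is again a simple point of the foliated surface $(F, E|_F; \mathcal{F}|_F)$ (of trace type if $p$ is a trace point, after restricting the auxiliary separatrix $H$, and of corner or nonsingular normal-crossings type otherwise). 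Consequently the empty combinatorial sequence of blowing-ups is a combinatorial reduction of singularities of $(F, E|_F; \mathcal{F}|_F)$ with respect to $E|_F$, and since $E|_F$ is the toric divisor, $\mathcal{F}|_F$ is of toric type.

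To reinforce that this is genuinely compatible with the combinatorial structure of $\sigma$, I would also check that the centers of $\sigma$ meeting $F$ restrict well: a point center (a triple point of $E$ on $F$) and a curve center meeting $F$ transversally both induce blowing-ups of $F$ at corners of $E|_F$, whereas a curve center contained in $F$ induces an isomorphism on the strict transform of $F$. In all cases the induced transformations of $F$ are combinatorial for the toric divisor $E|_F$, so no incompatibility arises between the ambient combinatorial process and the toric structure of $F$.

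The main obstacle is precisely this identification of $(F, E|_F; \mathcal{F}|_F)$ as a toric type foliation in the exact sense required by \cite{Mol}: one must make sure that the dicritical component occupies a ``free'' coordinate in the local simple models, so that restriction to $F$ preserves simplicity and does not create new singular behaviour, and that the divisor inherited on $F$ is \emph{exactly} the toric divisor of Lemma \ref{lema:superficietorica}. Once these compatibilities are in place, applying the prolongation property of \cite{Mol} to the projective toric surface $F$ finishes the proof.
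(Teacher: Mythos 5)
Your proposal is correct and follows essentially the same route as the paper: Lemma \ref{lema:superficietorica} identifies $F$ as a nonsingular projective toric surface with $E|_F$ the torus-action divisor, and the prolongation property is then imported from \cite{Mol}. The extra work you do---checking via the local simple models that restriction to the dicritical component $F$ preserves simplicity, so that $(F,E|_F;\mathcal{F}|_F)$ is desingularized (and hence of toric type via the identity, with no saddle-nodes, as the complex hyperbolic hypothesis of \cite{Mol} requires)---is exactly the detail the paper leaves implicit when it calls the restricted surface ``desingularized'', so it is a legitimate filling-in rather than a different argument.
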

\begin{proof} 
By Lemma \ref{lema:superficietorica}, we know that $F$ is a nonsingular projective toric surface and $E|_F$ is the normal crossings divisor given by the torus action. In this situation, the results in \cite{Mol} assure that the prolongation property for isolated branches holds for  the desingularized foliated surface $(F,E|_{F};\mathcal{F}|_{F})$.
\end{proof}

The proof of Proposition \ref{prop:propiedadprolongaciontipotorico} a) is ended.
\begin{rem} 
In these results we need the complex hyperbolic hipothesis. More precisely, there are toric type foliations on ${\mathbb P}^2_{\mathbb C}$ with the standard divisor $X_0X_1X_2=0$ that do not satisfy the prolongation property for isolated branches. For instance, if we consider the foliation given in homogeneous coordinates by the $1$-form
$$
(X_0^2-X_0X_2-X_1X_2)\frac{dX_0}{X_0}+(X_0X_2-X_0^2)\frac{dX_1}{X_1}+
X_1X_2\frac{dX_2}{X_2}
$$
we find that it has a combinatorial reduction of singularities, it is not complex hyperbolic and the prolongation property does not hold. The reduction of singularities needs three blowing-ups and the prolongation property falls at the invariant curve $X_0=X_2$; indeed, at the point $[1,0,1]$ it is the isolated branch of a saddle-node, but it is not isolated at the point $[0,1,0]$.
\end{rem}
\section{The Hunt of Trace Singularities} 
\label{sec:Corner Type Points under Combinatorial Blowing-ups}
	
In this section we prove Proposition \ref{prop:propiedadprolongaciontipotorico} b).  In order to do that, we show that the set of trace type singularities $\mathcal{T}_{\mathcal{F},E}$ is not empty. This implies the existence of at least one extended partial separatrix.

Recall that $(({\mathbb C}^3,0), E^0;{\mathcal F}_0)$ is a complex hyperbolic foliated space, where
$e_0(E^0)=3$, the three components of $E^0$ are dicritical and $\sigma$ is a combinatorial reduction of singularities as in Equation \ref{eq:sigmasigma}.

In the proof we use the next version of the ``refined Camacho-Sad's Theorem'' established in \cite{Ort-R-V}:

\begin{quote}
``Let $(S,D;\mathcal{G})\to((\mathbb{C}^2,0),D^0;\mathcal{G}_0)$ be the composition of a finite sequence of blowing-ups, where $\mathcal{G}_0$ is a complex hyperbolic foliation. Assume that there is a connected component $Z$ of $D_\text{inv}$ such that the points of $Z$ are simple for $(S,D;\mathcal{G})$. Then, there is at least one trace type simple singularity in $Z$.''
\end{quote}

The arguments in Lemma \ref{lema:lema1} below are also used in the study of the case $e_0(E^0)=2$ in Section \ref{sec:Totally dicritical case}.
\begin{lema} 
\label{lema:lema1}
If there is an invariant irreducible component $F \subset E_\text{inv}$ such that $\sigma(F)$ is a curve, then $\mathcal{T}_{\mathcal{F},E}\ne \emptyset$.	
\end{lema}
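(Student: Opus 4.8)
The plan is to reduce this three-dimensional statement to the two-dimensional refined Camacho-Sad theorem of \cite{Ort-R-V}, by means of a generic transversal section along the curve $\sigma(F)$. First I would identify $\sigma(F)$ combinatorially. Since $\sigma$ is a combinatorial sequence of blowing-ups and $E^0=E^0_1\cup E^0_2\cup E^0_3$ has exactly three dicritical components, the image $\sigma(F)$, being an irreducible curve contained in $E^0$, must be the closure of a one-dimensional stratum, that is, the intersection of two of the components, say $\sigma(F)=E^0_1\cap E^0_2$. I fix a generic point $q\in\sigma(F)$ and a two-dimensional germ $(P,q)$ transversal to $\sigma(F)$ at $q$; for generic $q$ the third component $E^0_3$ does not pass through $q$, so $D^0:=E^0|_P=(E^0_1\cup E^0_2)|_P$ consists of two smooth dicritical branches of $\mathcal{G}_0:=\mathcal{F}_0|_P$ meeting at $q$. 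The blowing-ups of $\sigma$ whose centers contain $\sigma(F)$ restrict over $P$ to point blowing-ups, while the remaining centers miss the generic slice; thus $\sigma$ induces a composition of blowing-ups $(S,D;\mathcal{G})\to((\mathbb{C}^2,0),D^0;\mathcal{G}_0)$, where $S$ is the transform of $P$ and $\mathcal{G}=\mathcal{F}|_S$, $D=E|_S$.

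Next I would verify the hypotheses of the refined Camacho-Sad theorem. The foliation $\mathcal{G}_0$ is complex hyperbolic: a saddle-node $\psi^*\mathcal{G}_0$ produced by some $\psi:(\mathbb{C}^2,0)\to(P,q)$ would, after composition with the inclusion $(P,q)\hookrightarrow(\mathbb{C}^3,0)$, provide a hidden saddle-node for $\mathcal{F}_0$, contradicting its complex hyperbolicity. The invariant component $F$ restricts to a curve $Z_F:=F\cap S\subset D$, which is invariant for $\mathcal{G}$ since $F$ is invariant for $\mathcal{F}$ and $Z_F$ is a transversal slice of $F$; hence $D_{\text{inv}}\ne\emptyset$. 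Moreover, $(M,E;\mathcal{F})$ is desingularized, so every point of $E$ is simple, and along $F$ over the generic point $q$ the dimensional type is at most two, so the foliation is an analytic cylinder as in Equation \ref{eq:escrituraminimal}; the transversal slice of each such simple three-dimensional point is a simple point of $(S,D;\mathcal{G})$. Therefore every point of $D$ is simple, and taking $Z$ to be the connected component of $D_{\text{inv}}$ containing $Z_F$, all the hypotheses of the theorem in \cite{Ort-R-V} are satisfied.

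The refined Camacho-Sad theorem then yields a trace type simple singularity $p'$ on $Z$, and it remains to lift it to a three-dimensional trace singularity. The cylinder structure along the invariant component of $E$ carrying $p'$ identifies a neighbourhood of $p'$ in $(S,D;\mathcal{G})$ with a transversal slice of $(M,E;\mathcal{F})$ near a point $p$ lying over $q$. The unique invariant divisor component through $p'$ lifts to the unique invariant component of $E$ through $p$, so that $e_p(E_{\text{inv}})=1$, and the invariant curve through $p'$ not contained in $D$ extends, via the cylinder, to an invariant germ of surface $(H,p)$ with $H\not\subset E$. By Remark \ref{rem:tipotraza} this says precisely that $p$ is a simple trace point, whence $p\in\mathcal{T}_{\mathcal{F},E}$ and the set is non-empty.

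The main obstacle is exactly this transversal slicing dictionary. One must check carefully that the generic section of the three-dimensional reduction of singularities is again a composition of blowing-ups of $(\mathbb{C}^2,0)$ to which \cite{Ort-R-V} applies, and that simplicity, the complex hyperbolic condition, and above all the trace type character are faithfully transported in both directions between the two- and three-dimensional pictures; the latter relies on controlling the dimensional type along $F$ so that Equation \ref{eq:escrituraminimal} furnishes a genuine analytic cylinder at the relevant points.
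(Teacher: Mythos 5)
Your strategy coincides with the paper's: slice transversally to $\Gamma=\sigma(F)=E^0_1\cap E^0_2$ at a generic point near the origin, observe that $\sigma$ induces a finite sequence of plane blowing-ups on the slice, and apply the refined Camacho--Sad theorem of \cite{Ort-R-V} to the chain of invariant slice curves. The identification of $\sigma(F)$, the complex hyperbolicity of the restricted foliation (via composition with the inclusion), and the induced two-dimensional blow-up sequence are all fine and match the paper. The genuine gap lies in your two transfer steps, which you run \emph{directly} while the paper runs the whole argument \emph{by contradiction}. First, your claim that ``the transversal slice of each such simple three-dimensional point is a simple point of $(S,D;\mathcal{G})$'' (and a fortiori ``every point of $D$ is simple'') is not established by the cylinder structure and is false in the stated generality: the slicing statement holds for simple \emph{corners} — this is exactly the key remark quoted in the paper's proof — but at a simple \emph{trace} point $p$ the slice is simple only if the section has normal crossings with $E\cup H$, where $H$ is the invariant surface not contained in $E$; nothing in your setup forces $S$ to be in general position with respect to these hidden surfaces $H$, and a tangency (or a singular intersection curve $H\cap S$) can produce a non-simple point of the restriction. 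Second, the lifting of the two-dimensional trace singularity $p'$ ``via the cylinder'' is vague; the correct mechanism is the contrapositive of the corner-slicing remark: if $p'$, as a point of $M$, were a simple corner, its slice would be a simple corner and not of trace type, so $p'$ must be an ambient trace point, which is singular because it lies on an invariant component of $E$ (dimensional type $\geq 2$).

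Both gaps evaporate under the paper's arrangement: assume $\mathcal{T}_{\mathcal{F},E}=\emptyset$; then every relevant ambient point is a simple corner, the slicing remark applies verbatim to give that all points of the invariant chain $Z$ are simple corners for the restriction, and the trace singularity produced by the refined Camacho--Sad theorem is an outright contradiction — no trace points need to be sliced and no lifting is needed. (Equivalently, you could keep your direct arrangement by first observing that if any ambient point in play is a trace point, it already witnesses $\mathcal{T}_{\mathcal{F},E}\neq\emptyset$ and you are done; this reduces to the all-corners situation, but you never make this reduction, and your proof as written relies on the unproved slice-simplicity at trace points.) A smaller difference: the paper verifies that the maximal invariant subchain $D_j,\dots,D_k$ meets the adjacent dicritical components at ambient-nonsingular points whose slice curves are dicritical for the restriction, so that $Z=\cup_{i=j}^k Y_i$ is genuinely a connected component of the invariant part of the sliced divisor; your definition of $Z$ as the connected component of $D_{\text{inv}}$ containing $F\cap S$ legitimately sidesteps this bookkeeping, provided the simplicity of its points is secured as above.
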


\begin{proof}
Denote $\Gamma= \sigma (F)$. We know that there are two irreducible components $E^0_1$ and $E^0_2$ of $E^0$, such that $\Gamma=E^0_1 \cap E^0_2$. Let us consider the divisor $D \subset E$ defined by
$$
D=\overline{\sigma^{-1}(\Gamma\setminus\{0\})}.
$$	
Denote $E_j$ to the strict transform of $E^0_j$, for $j=1,2$. There is a linear chain $\{D_i\}_{i=0}^{n+1}$ of irreducible components of $E$, with $n \geq 1$, such that $D=\cup_{i=1}^nD_i$, $D_0=E_1$, $D_{n+1}=E_2$ and $D_i\cap D_{i+1}\ne\emptyset$, for every $i\in \{0,1,\ldots,n\}$. Note that $F=D_\ell$ for an index $\ell\in \{1,2,\ldots,n\}$.

Assume $\mathcal{T}_{\mathcal{F},E}=\emptyset$ and let us find a contradiction.
		
Choose a coordinate system $(x_1,x_2,y)$ at the origin of $\mathbb{C}^3$ such that $E_j^0=(x_j=0)$, for $j=1,2$, and  $(y=0)$ is not invariant for $\mathcal{F}_0$. We select a nonzero constant $c\in \mathbb{C}^*$ close enough to the origin. We have that $\Delta^0_c=(y=c)$ is generically transversal to $\mathcal{F}_0$ through the point $q_c=(0,0,c)$. Moreover, the morphism $\sigma$ induces a sequence of blowing-ups between foliated surfaces
$$
(\Delta_c,E|_{\Delta_c} ;\mathcal{F}|_{\Delta_{c}}) \to (\Delta^0_c,E^0|_{\Delta^0_c}; \mathcal{F}_0|_{\Delta^0_{c}}),
$$
where  $\Delta_c$ is the strict transform of $\Delta^0_c$ by $\sigma$. The following remark is key for our arguments:
\begin{quote}
``Let $p$ be a simple corner for a three-dimensional foliated space $(M,E; \mathcal{F})$ and consider a two-dimensional germ $T$ having normal crossings with $E$. We have that $T$ is transversal to $\mathcal{F}$ and that $p$ is also a simple corner point for the restriction $(T,E|_T ;\mathcal{F}|_T)$. Moreover, the point $p$ is singular for $(M,E;\mathcal{F})$ if and only if it is singular for the restriction to $T$.''
\end{quote}
Recalling that $D_\ell$ is invariant and that $D_0,D_{n+1}$ are dicritical components, there are indices $j,k\in \{1,2,\ldots,n\}$ with $j \leq k$ such that $D_i$ is invariant for every $j \leq i \leq k$ and $D_{j-1},D_{k+1}$ are dicritical components. We write $Y_{i}=D_i \cap \Delta_c$, for every $i\in\{0,1,\ldots,n+1\}$.

Note that $Y_{i}$ is an invariant component of $E|_{\Delta_c}$, for every $j\leq i \leq k$. Write $Z=\cup_{i=j}^kY_{i}$; since $\mathcal{T}_{\mathcal{F},E}=\emptyset$, we have that each point in $Z$ is a simple corner for $(M,E;\mathcal{F})$. In view of the above remark, all the points in $Z$ are simple corners for the restriction $(\Delta_c,E|_{\Delta_c}; \mathcal{F}|_{\Delta_{c}})$. Moreover, the points
$$
p_{j-1}=Y_{j-1} \cap Y_{j}, \quad p_{k}=Y_{k} \cap Y_{k+1},
$$
are nonsingular for $\mathcal{F}|_{\Delta_c}$. We conclude that $Y_{j-1}$ and $Y_{k+1}$ must be dicritical components for $(\Delta_c,E|_{\Delta_c} ;\mathcal{F}|_{\Delta_{c}})$. In this way, we find a contradiction with the ``refined Camacho-Sad's Theorem''.
\end{proof}

\begin{proof}[End of the proof of Proposition \ref{prop:propiedadprolongaciontipotorico} b)] 
We recall that it is enough to show that ${\mathcal T}_{{\mathcal F}, E}\ne \emptyset$.
We write $E^0=E^0_1 \cup E^0_2 \cup E^0_3$,  $\Gamma^0_i=E^0_j \cap E^0_k$, with  $\{i,j,k\}=\{1,2,3\}$ and we denote
$$
D^2=\overline{\sigma^{-1}(\Gamma^0_2\setminus\{0\})}, \quad D^3=\overline{\sigma^{-1}(\Gamma^0_3\setminus\{0\})}.
$$
Note that $E=E_1\cup E_2\cup E_3 \cup D^2\cup D^3 \cup \tilde{E}$, where $\tilde{E}=\sigma^{-1}(\Gamma_1^0)$ and $E_i$ is the strict transform of $E^0_i$, for $i=1,2,3$. The restriction $(E_1, E|_{E_1};\mathcal{F}|_{E_1})$ is a desingularized foliated surface, obtained from $(E_1^0,E|_{E_1^0};\mathcal{F}_0|_{E^0_1})$ by a sequence of blowing-ups induced by $\sigma$. We have that
$$
E|_{E_1} =(\tilde{E}\cap E_1) \cup \Gamma_2\cup \Gamma_3,
$$
where $\Gamma_j=D^j\cap E_1$, for $j=2,3$. An irreducible component of $E|_{E_1}$ is invariant for $\mathcal{F}|_{E_1}$ if and only if it is the intersection of $E_1$ with an invariant component of $E$, since $(M,E;\mathcal{F})$ is a desingularized foliated space and $E_1$ is a dicritical component of $E$. In particular, if the branch $\Gamma_3$ is invariant, we have that $\Gamma^0_3$ has been used as center of blowing-up, hence $D^3$ is a normal crossings divisor; moreover, we obtain that $D^3_\text{inv}\ne \emptyset$. In this case, we conclude by Lemma \ref{lema:lema1}. We argue in the same way when $\Gamma_2$ is invariant.
		
Let us suppose now that $\Gamma_2$ and $\Gamma_3$ are not invariant for $\mathcal{F}|_{E_1}$. There are points $p\in E_1$, with $e_{p}(E|_{E_1})=2$. Since $(E_1, E|_{E_1};\mathcal{F}|_{E_1})$ is a desingularized foliated surface, we conclude that
$$
(E|_{E_1})_\text{inv}=\tilde{E}_\text{inv}\cap E_1 \ne \emptyset.
$$
By the ``refined Camacho-Sad's Theorem'', there is a point $p \in \tilde{E}_\text{inv}\cap E_1$ that is a singularity of trace type for $(E_1, E|_{E_1};\mathcal{F}|_{E_1})$. We have that $p$ is also a singularity of trace type for $(M,E;\mathcal{F})$ and we are done.
\end{proof}
The proof of Proposition \ref{prop:propiedadprolongaciontipotorico} is ended. Hence we know that Theorem \ref{teo:main} is true when $e_0(E^0)=3$. It remains to consider the case $e_0(E^0)=2$, that we do in next Section \ref{sec:Totally dicritical case}.
\section{Equirreduction case} 
\label{sec:Totally dicritical case}
We conclude here the proof of Theorem \ref{teo:main} by considering the case $e_0(E^0)=2$. Recall that the two components of $E^0$ are dicritical and $\sigma$ is a combinatorial reduction of singularities as in Equation \ref{eq:sigmasigma}. We look directly for a closed surface $S$ of $M$, invariant by $\mathcal{F}$, with $S\not\subset E$. Its image under $\sigma$ provides the desired invariant surface. The existence of such a closed surface $S$ is given in Proposition \ref{prop:doscomponentes} below.

\begin{prop}
\label{prop:doscomponentes}
Let $\sigma:((M,\sigma^{-1}(0)), E;{\mathcal F})\rightarrow (({\mathbb C}^3,0), E^0;{\mathcal F}_0)$ be a combinatorial reduction of singularities of a complex hyperbolic foliated space, where $E^0$ has two irreducible components, both being dicritical. Then, there is a closed invariant surface 
$$
(S,S\cap\sigma^{-1}(0))\subset (M,\sigma^{-1}(0))
$$ 
such that  $S\not\subset E$.
\end{prop}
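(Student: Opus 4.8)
The plan is to exploit the rigid structure forced by having only two divisor components. First I would dispose of the trivial configuration: if the origin is a nonsingular point of $\mathcal{F}_0$, then, both components of $E^0$ being transversal, the leaf of $\mathcal{F}_0$ through the origin is an invariant surface not contained in $E^0$, and its strict transform gives the desired $S$. So assume the origin is singular. Since $E^0_{\mathrm{inv}}=\emptyset$, the origin is neither a simple corner nor a simple trace, hence not simple, so $\sigma$ is not the identity. The only closure of a stratum that can be an invariant center is $\Gamma^0=E^0_1\cap E^0_2$; therefore $\Gamma^0$ is invariant for $\mathcal{F}_0$ and is the center of the first blowing-up. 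As two transversal components never produce a triple point, no point is ever a center: every center is an intersection of two components, i.e.\ a section over $\Gamma^0$, and every exceptional component is a ruled surface over $\Gamma^0$, in particular non-compact with $\sigma(F)=\Gamma^0$. Thus, over $\Gamma^0\setminus\{0\}$, the morphism $\sigma$ is a trivial family of a fixed two-dimensional combinatorial reduction of singularities, transported by the transversal sections.

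Next I would pass to a transversal slice. Fixing coordinates $(x_1,x_2,y)$ with $E^0_j=(x_j=0)$ and $\Gamma^0=(x_1=x_2=0)$, for small $c\neq 0$ the section $\Delta^0_c=(y=c)$ is transversal to everything and $\sigma$ induces a combinatorial reduction of the foliated surface $\mathcal{G}_c=\mathcal{F}_0|_{\Delta^0_c}$ on $(\mathbb{C}^2,q_c)$ with divisor $(x_1x_2=0)$. Since $\Delta^0_c\hookrightarrow\mathbb{C}^3$ is a holomorphic immersion, $\mathcal{G}_c$ is complex hyperbolic; and since both components are dicritical, neither axis is invariant for $\mathcal{G}_c$. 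By the theorem of Camacho--Sad \cite{Cam-S}, $\mathcal{G}_c$ has a separatrix, necessarily off the axes; in the desingularized slice it is an isolated branch through a trace type point (Remark~\ref{rem:isolateddesingularized}), which, by the restriction remark in the proof of Lemma~\ref{lema:lema1}, is a trace type point of $(M,E;\mathcal{F})$. Hence $\mathcal{T}_{\mathcal{F},E}\neq\emptyset$. Equivalently, if the exceptional chain already contains an invariant component, Lemma~\ref{lema:lema1} applies directly.

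I would then build the invariant surface by suspension along $\Gamma^0$. The holonomy of $\mathcal{F}_0$ around the puncture of $\Gamma^0\setminus\{0\}$ is a germ of biholomorphism $h$ of $(\Delta^0_{c_0},q_{c_0})$ preserving $\mathcal{G}_{c_0}$, and it permutes the finite set of isolated separatrices. Choosing a single $h$-orbit and transporting it along the leaves of $\mathcal{F}_0$ over $\Gamma^0\setminus\{0\}$ yields a single-valued germ of invariant surface $S^*$ along $\Gamma^0\setminus\{0\}$, with $S^*\not\subset E^0$; upstairs this is a germ of invariant surface along the trace curve deprived of the central fiber.

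The hard part will be extending $S^*$ across the central fiber $\sigma^{-1}(0)$ and showing the result is closed, after which Remmert's theorem \cite{Rem} provides the surface downstairs. In contrast to the case $e_0(E^0)=3$, I cannot appeal to Proposition~\ref{prop:prolongacionimplicacompletitud}: in the two-component case the exceptional locus may be a totally dicritical linear chain for which the prolongation property fails, so completeness is not available and the closure must be argued directly. The tool is precisely the equisingular product structure over $\Gamma^0$ established above: it bounds the transported separatrices uniformly as $y\to 0$, so $S^*$ has locally finite volume near $\sigma^{-1}(0)$ and its closure is analytic by the extension theorem for analytic sets of locally finite volume across the hypersurface $(y=0)$, while invariance persists across the central fiber by the identity principle together with the complex hyperbolic hypothesis, which forces convergence of the separatrices along the compact fiber. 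This produces a closed invariant surface $S\subset M$ with $S\not\subset E$, as required.
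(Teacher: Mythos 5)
There are two genuine gaps. The first is your hunt for a trace singularity. From Camacho--Sad \cite{Cam-S} you get a separatrix of $\mathcal{G}_c$ off the axes, but a separatrix need \emph{not} be an isolated branch: Remark~\ref{rem:isolateddesingularized} says a branch is isolated exactly when its infinitely near point is a trace type \emph{singularity}, and the lift of a Camacho--Sad separatrix may perfectly well meet a dicritical component of the desingularized slice at a nonsingular point, in which case it is one of infinitely many non-isolated branches. This is not a hypothetical: in the two-component case the exceptional chain $\{D_i\}$ can be \emph{totally dicritical} (the paper points this out explicitly), and then $\mathcal{T}_{\mathcal{F},E}=\emptyset$ --- every point of $\sigma^{-1}(0)$ is a nonsingular trace type simple point, and the slice foliations have no isolated separatrices at all. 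So your conclusion ``hence $\mathcal{T}_{\mathcal{F},E}\neq\emptyset$'' is false precisely in the case your Camacho--Sad step was meant to cover (you delegate the invariant-chain case to Lemma~\ref{lema:lema1}), and your suspension then has nothing to suspend: the finite $h$-invariant set of isolated separatrices is empty. This is why the paper never uses plain Camacho--Sad: in the mixed case it applies the \emph{refined} version of \cite{Ort-R-V} to a maximal invariant subchain $Y_j\cup\cdots\cup Y_k$ of the slice divisor flanked by dicritical components (plain Camacho--Sad cannot exclude that all separatrices escape through the dicritical ends), and in the totally dicritical case it starts instead from the leaf through an arbitrary point of $\sigma^{-1}(0)$.

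The second gap is the closure across the central fiber, which you correctly identify as the hard part and then settle by assertion. The equisingular product structure you establish lives only over $\Gamma^0\setminus\{0\}$; by itself it gives no uniform control of the transported separatrices as $y\to 0$, so the locally finite volume hypothesis needed for a Bishop-type extension across $(y=0)$ is exactly what remains unproved, and ``complex hyperbolicity forces convergence along the compact fiber'' is not an argument. The paper's proof avoids all such estimates by working upstairs and combinatorially: it considers the family $\mathcal{H}$ of germs of invariant surface $(S_Z,Z)$ whose germification sets $Z$ are points or connected unions of the finitely many compact curves $Y_i=D_i\cap\sigma^{-1}(0)$; nonemptiness of $\mathcal{H}$ is the existence step discussed above, and then either $S_Z\cap\sigma^{-1}(0)=Z$, giving a closed immersion outright, or $S_Z$ picks up one more curve $Y_{i_1}$, in which case the germ extends along $Y_{i_1}$ by the local description at simple trace points. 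Finiteness of the chain makes this saturation terminate, and closedness is automatic, with Remmert \cite{Rem} then giving the surface downstairs. Your slice-plus-suspension route could replace the existence step in the mixed case, but to be a proof it must (i) handle the totally dicritical case by a different mechanism and (ii) replace the volume bound by an actual argument --- for instance, exactly the finite extension process the paper performs.
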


The morphism $\sigma$ is a composition of blowing-ups with one-dimensional combinatorial centers. More precisely, let $E^0=E^0_1\cup E^0_2$ be the decomposition of $E^0$ into irreducible components and let $\Gamma$ be the intersection $\Gamma=E^0_1\cap E^0_2$. Taking notations as in Lemma \ref{lema:lema1} we have that $E=E_1\cup E_2\cup D$, where $D=\sigma^{-1}(\Gamma)$ and $E_1,E_2$ are respectively the strict transforms of $E^0_1$ and $E^0_2$.  Moreover, there is a linear chain $\{D_i\}_{i=0}^{n+1}$ of irreducible components of $E$, with $n \geq 0$, such that
$$
D=\cup_{i=1}^nD_i,\quad D_0=E_1, \quad D_{n+1}=E_2
$$
with $D_i\cap D_{i+1}\ne\emptyset$ and $D_i\cap D_{i+t}=\emptyset$, for $i\in \{0,1,\ldots,n\}$ and $t\geq 2$.

We denote $Y_i=D_i\cap \sigma^{-1}(0)$, for $i=1,\ldots,n$. Note that all the $Y_i$ are compact irreducible curves in $E$. Let us define the family ${\mathcal H}$ of subsets $Z\subset \sigma^{-1}(0)$ that are either singletons or connected unions of curves $Y_i$ satisfying the following property:
\begin{quote}
There is a germ of invariant surface $(S_Z,Z)$, with $Z$ as the germification set, that is invariant for $\mathcal F$ and $S_Z\not\subset E$.
\end{quote}
Note that $(S_Z,Z)$ is necessarily unique, since locally it is so. Moreover, if $Z,Z'\in {\mathcal H}$ with $Z'\subset Z$, then $S_{Z'}\subset S_Z$.

\begin{lema}
\label{lema:existence}
${\mathcal H}\ne \emptyset$.
\end{lema}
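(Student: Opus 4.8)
The plan is to produce a single point $q\in\sigma^{-1}(0)$ through which passes a germ of invariant surface not contained in $E$; then the singleton $\{q\}$ belongs to ${\mathcal H}$ and we are done. Recall that the fiber $\sigma^{-1}(0)=\cup_{i=1}^n Y_i$ is a connected chain of compact curves, that $D_0=E_1$ and $D_{n+1}=E_2$ are dicritical, and that each inner component $D_i$, with $1\le i\le n$, is either dicritical or invariant. I would split the argument according to whether some inner component is dicritical. (If $n=0$, the morphism $\sigma$ is the identity and the origin is a simple point; the germ of invariant surface through $0$ not contained in $E^0$---the leaf if $0$ is regular, otherwise the invariant hypersurface of Remark \ref{rem:tipotraza}---gives $\{0\}\in{\mathcal H}$.)

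First, suppose some inner component $D_i$ is dicritical. Since $(M,E;{\mathcal F})$ is desingularized and ${\mathcal F}$ is generically transversal to $D_i$, the points of the compact curve $Y_i$ that are regular for ${\mathcal F}$, lie on no component of $E$ other than $D_i$, and at which ${\mathcal F}$ is transversal to $D_i$ form a nonempty Zariski-open subset of $Y_i$: the singular locus and the tangency locus of ${\mathcal F}$ meet $D_i$ in a proper closed set, and the corners on $Y_i$ are finite in number. Choosing such a point $q$, the germ of leaf of ${\mathcal F}$ through $q$ is transversal to $D_i$ and meets no other component of $E$; hence it is a germ of invariant surface with $S_{\{q\}}\not\subset E$.

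Second, suppose every inner component $D_1,\dots,D_n$ is invariant. Then $D_1$ is an invariant component with $\sigma(D_1)=\Gamma$ a curve, and Lemma \ref{lema:lema1} yields ${\mathcal T}_{{\mathcal F},E}\ne\emptyset$. The delicate point, which I expect to be the main obstacle, is to locate a trace type singularity on the central fiber $\sigma^{-1}(0)$ rather than merely on a nearby transversal slice. Here I would exploit that $\sigma$ is an equireduction, \'etale over $\Gamma$: the centers are curves mapping isomorphically onto $\Gamma$, so the foliated space is trivial along $\Gamma$ and every irreducible component of ${\mathcal T}_{{\mathcal F},E}$ is a curve contained in some invariant $D_i$ and projecting onto $\Gamma$, whence it meets the fiber over $0$. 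Concretely, arguing as in Lemma \ref{lema:lema1}, the refined Camacho-Sad's Theorem applied to the invariant chain $\cup_{i=1}^n(D_i\cap\Delta_c)$ in a transversal slice $\Delta_c$ produces a trace type singularity, and transporting it along $\Gamma$ by the \'etale structure gives a trace type singularity $q\in\sigma^{-1}(0)$. By Remark \ref{rem:tipotraza}, there is then a germ of nonsingular invariant hypersurface $H$ at $q$ with $H\not\subset E$, so $S_{\{q\}}=H\not\subset E$.

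In either case $\{q\}\in{\mathcal H}$, which proves ${\mathcal H}\ne\emptyset$. The only genuinely nontrivial step is the horizontal transport of the trace singularity to the fiber over $0$, where the \'etale structure over $\Gamma$ is essential; everything else reduces to a transversality observation or a direct two-dimensional consideration.
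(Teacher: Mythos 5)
Your overall architecture matches the paper's: reduce to exhibiting a single trace type simple point (singular or not) in $\sigma^{-1}(0)$, and split according to dicriticalness of the components $D_i$. Your case split is slightly finer than the paper's: the paper distinguishes ``all $D_i$ dicritical'' (where every point of $\sigma^{-1}(0)$ is a nonsingular trace type simple point) from ``some component of $E$ invariant'' (handled by the arguments of Lemma \ref{lema:lema1}), so the mixed configurations go through Camacho--Sad there, while you dispose of them by the generic-leaf argument on an inner dicritical $D_i$. That argument is correct, and can even be shortened: at a point of $Y_i$ lying on no component of $E$ other than the dicritical $D_i$, there is no invariant component of $E$ through the point, so a simple point there cannot be a singular one (a simple corner or trace of dimensional type $\tau\geq 2$ needs $\tau\geq 2$ invariant components among $E\cup H$), and simplicity at a regular point forces normal crossings, i.e.\ transversality of the leaf with $D_i$; hence all but the finitely many points of $Y_i$ lying on adjacent components work.

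The genuine flaw is in your case (b), precisely at the step you flagged. The claim that ``the foliated space is trivial along $\Gamma$'' is not available: the equireduction structure is a statement about the \emph{divisor and the centers} (all centers are curves \'etale over $\Gamma$), not about $\mathcal{F}_0$, for which no equisingularity along $\Gamma$ is assumed. Consequently there is no product structure with which to ``transport'' a singular point horizontally, and the assertion that every irreducible component of $\mathcal{T}_{\mathcal{F},E}$ projects \emph{onto} $\Gamma$ does not follow as you state it (a priori a trace curve could be a fiber component of the ruled surface $D_i$). Fortunately the conclusion you need is much cheaper and requires no transport at all: $\mathcal{T}_{\mathcal{F},E}$ is a closed analytic subset of $M$, and $M$ is a germ along the compact set $\sigma^{-1}(0)$; if $\mathcal{T}_{\mathcal{F},E}\cap\sigma^{-1}(0)=\emptyset$, then $M\setminus\mathcal{T}_{\mathcal{F},E}$ is a neighborhood of $\sigma^{-1}(0)$, so $\mathcal{T}_{\mathcal{F},E}=\emptyset$ as a germ, contradicting Lemma \ref{lema:lema1}. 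Alternatively---and this is what the paper means by ``the arguments in Lemma \ref{lema:lema1} also work in this case''---one can take the slice $\Delta_0=(y=0)$ through the origin itself: since $e_0(E^0)=2$, all centers are curves \'etale over $\Gamma$ and hence transversal to the strict transform of a non-invariant $(y=0)$, the curves $D_i\cap\Delta_0$ lie inside $\sigma^{-1}(0)$, and the refined Camacho--Sad Theorem applied in $\Delta_0$ produces the trace singularity directly in the central fiber. Either repair closes the gap, so your proof is salvageable with a local correction, but as written the transport step is unjustified.
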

\begin{proof} 
It is enough to show the existence of a point $p\in \sigma^{-1}(0)$ such that $\{p\}\in {\mathcal H}$. Note that such $\{p\}\in {\mathcal H}$ if and only if $p$ is a trace type simple point, that may be singular or not.

Let us consider first the ``totally dicritical'' case, that is we assume that all the components $D_i$ are dicritical, for $i=0,1,\ldots,n+1$. Any point in $\sigma^{-1}(0)$ is a nonsingular trace type simple point.

Let us consider now the case when there is at least one invariant component in $E$. The arguments in Lemma \ref{lema:lema1} also work in this case and we find a trace type singularity in $\sigma^{-1}(0)$.
\end{proof}
\begin{proof}[Proof of Proposition \ref{prop:doscomponentes}] 
Let us see that there is  $Z\in {\mathcal H}$ such that $(S_Z,Z)\subset (M,\sigma^{-1}(0))$ is a closed  immersion. In view of  Lemma \ref{lema:existence}, we can select an element
$Z'\in {\mathcal H}$. By the local description of simple points, we see that the germ
$$
(S_{Z'}\cap \sigma^{-1}(0), Z')
$$ 
is either $Z'$ or it also contains the germ $(Y_{i_1}\cup Z',Z')$, for $Y_{i_1}\not\subset Z'$. In the first case, we have a closed immersion $(S_{Z'}, Z')\subset (M,\sigma^{-1}(0))$ and we are done. In the second case, we see that $Z_1=Z'\cup Y_{i_1}\in {\mathcal H}$ by local extension of the invariant surface at simple trace points. Repeating the argument, we obtain $Z\supset Z'$, with $Z\in {\mathcal H}$ that defines a closed immersion as desired.
\end{proof}

The proof of Theorem \ref{teo:main} is ended.

\end{document}